\documentclass[sn-mathphys]{sn-jnl}

\jyear{2023}%
\theoremstyle{thmstyleone}%
\newtheorem{theorem}{Theorem}
\newtheorem{lemma}{Lemma}
\raggedbottom

\begin{document}
\title[The behaviour of the Gauss-Radau upper bound]{The behaviour of the Gauss-Radau upper bound of~the~error norm in CG}

\author[1]{\fnm{G\'erard} \sur{Meurant}}\email{gerard.meurant@gmail.com}
\author*[2]{\fnm{Petr} \sur{Tich\'y}}\email{petr.tichy@mff.cuni.cz}
\affil[1]{\orgaddress{\city{Paris},  \country{France}}}
\affil*[2]{\orgdiv{Faculty of Mathematics and Physics}, \orgname{Charles University}, \orgaddress{\street{Sokolovsk\'a 83}, \city{Prague}, \postcode{18675}, \country{Czech Republic}}}

\abstract{
Consider the problem of solving systems of linear algebraic equations $Ax=b$ with a real symmetric positive definite matrix $A$ using the conjugate gradient (CG) method.
To stop the algorithm at the appropriate moment,
it is important to monitor the quality
of the approximate solution.
One of the most relevant quantities
for measuring the quality of the approximate solution is the $A$-norm of the error. This quantity cannot be easily {computed}, however, it can be estimated.
In this paper we discuss and analyze the behaviour
of the Gauss-Radau upper bound on the $A$-norm of the error, based
on viewing CG as a procedure for approximating a certain Riemann-Stieltjes integral.
This upper bound depends on a prescribed underestimate $\mu$ to the smallest eigenvalue of $A$.
We concentrate on explaining a phenomenon observed during computations
showing that, in later CG iterations, the upper bound loses its accuracy,
and is almost independent of~$\mu$.
We construct a model problem that is used to demonstrate and study the behaviour of the upper bound in dependence of~$\mu$, and developed
formulas that are helpful in understanding this behavior.
We show that the above mentioned phenomenon is closely related
to the convergence of the smallest Ritz value to the smallest eigenvalue
of $A$. It occurs when the smallest Ritz value is a better approximation
to the smallest eigenvalue than the prescribed underestimate $\mu$.
%
{We also suggest an adaptive strategy for improving the
accuracy of the upper bounds in the
previous iterations.}

}

\keywords{Conjugate gradients, error bounds, Gauss-Radau quadrature}

\pacs[MSC Classification]{65F10, 65G50}

\maketitle

\section{Introduction}

Our aim in this paper is to explain the origin of the problems that
have been noticed \cite{MeTi2019} when {computing} Gauss-Radau quadrature
upper bounds of the $A$-norm of the error in the Conjugate Gradient
(CG) algorithm for solving linear systems $Ax=b$ with a symmetric
positive definite matrix of order $N$.


The connection between CG and Gauss quadrature has been
known since the seminal paper of Hestenes and Stiefel \cite{HeSt1952}
in 1952. This link has been exploited by Gene H.~Golub and his collaborators
to bound or estimate the $A$-norm of the error in CG during the iterations;
see
\cite{DaEiGo1972,DaGoNa1979,GoMe1994,GoMe1997,GoSt1994,Me1997,Me1999,MeTi2013,MeTi2014,MeTi2019,StTi2002,StTi2005}.

Using a fixed node $\mu$ smaller than the smallest eigenvalue of
$A$ and the Gauss-Radau quadrature rule, an upper bound for the $A$-norm
of the error can be easily computed. Note that it is useful to have
an upper bound of the error norm to stop the CG iterations. In theory,
the closer $\mu$ is to the smallest eigenvalue, the closer is the
bound to the norm.
%
{Concerning the approximation properties of the upper bound,
we observed in many examples that
in earlier iterations,
the bound is approximating the $A$-norm of the error quite well,
and that the quality of approximation is improving with increasing iterations. However,
in later CG iterations, the bound suddenly becomes worse:
it is delayed, almost independent of $\mu$,
and does not represent a good approximation to the $A$-norm of the error any more.
Such a behavior of the upper bound can be observed also in
exact arithmetic.
Therefore, the problem of the loss of accuracy of the upper bound in later iterations is not directly} linked to rounding
errors and has to be explained.


The Gauss quadrature bounds of the error norm were obtained by using
the connection of CG to the Lanczos algorithm and modifications of
the tridiagonal matrix which is generated by this algorithm and implicitly
by CG. This is why we start in Section~\ref{sec:lanczos} with the Lanczos algorithm.
In Section~\ref{sec:estimation} we discuss the relation with CG and how the Gauss-Radau
upper bound is computed. A~model problem showing the problems arising
with the Gauss-Radau bound in ``exact'' arithmetic is constructed in Section~\ref{sec:model}.
In Sections~\ref{sec:analysis} to \ref{sec:detection} we give an analysis that explains that the problems start when
the distance of the smallest Ritz value to the smallest eigenvalue becomes smaller than
the distance of~$\mu$ to the smallest eigenvalue. We also explain why the Gauss-Radau upper bound
becomes almost independent of~$\mu$.
{In Section~\ref{sec:improved} we present an algorithm for
improving the upper bounds in previous CG iterations
such that the relative accuracy of the upper bounds is
guaranteed to be smaller than a prescribed tolerance.}
Conclusions are given in Section~\ref{sec:conclusions}.

\section{The Lanczos algorithm}
\label{sec:lanczos}
Given a starting vector $v$ and a symmetric matrix $A\in\mathbb{R}^{N\times N}$,
one can consider a sequence of nested Krylov subspaces
\[
\mathcal{K}_{k}(A,v)\equiv\mathrm{span}\{v,Av,\dots,A^{k-1}v\},\qquad k=1,2,\dots
\]
The dimension of these subspaces can increase up to an index $n$
called the \emph{grade of $v$ with respect to $A$}, at which the
maximal dimension is attained, and $\mathcal{K}_{n}(A,v)$ is invariant
under multiplication with $A$.
\begin{algorithm}[h]
\caption{Lanczos algorithm}
\label{alg:lanczos}

\begin{algorithmic}[0]

\State \textbf{input} $A$, $v$

\State $\beta_{0}=0$, $v_{0}=0$

\State $v_{1}=v/\|v\|$

\For{$k=1,\dots$}

\State $w=Av_{k}-\beta_{k-1}v_{k-1}$

\State $\alpha_{k}=v_{k}^{T}w$

\State $w=w-\alpha_{k}v_{k}$

\State $\beta_{k}=\|w\|$

\State $v_{k+1}=w/\beta_{k}$

\EndFor

\end{algorithmic}
\end{algorithm}

Assuming that $k<n$, the Lanczos algorithm (Algorithm \ref{alg:lanczos})
computes an orthonormal basis $v_{1},\dots,v_{k+1}$ of the Krylov
subspace $\mathcal{K}_{k+1}(A,v)$. The basis vectors $v_{j}$ satisfy
the matrix relation
\[
AV_{k}=V_{k}T_{k}+\beta_{k}v_{k+1}e_{k}^{T}
\]
where $e_{k}$ is the last column of the identity matrix of order
$k$, $V_{k}=[v_{1}\cdots v_{k}]$ and $T_{k}$ is the $k\times k$
symmetric tridiagonal matrix of the recurrence coefficients computed
in Algorithm \ref{alg:lanczos}:
\[
T_{k}=\left[\begin{array}{cccc}
\alpha_{1} & \beta_{1}\\
\beta_{1} & \ddots & \ddots\\
 & \ddots & \ddots & \beta_{k-1}\\
 &  & \beta_{k-1} & \alpha_{k}
\end{array}\right].
\]
The coefficients $\beta_{j}$ being positive, $T_{k}$ is a so-called
Jacobi matrix. If $A$ is positive definite, then $T_{k}$ is positive
definite as well. In the following we will assume for simplicity that
the eigenvalues of $A$ are simple and sorted such that
\[
\lambda_{1}<\lambda_{2}<\cdots<\lambda_{N}.
\]

\subsection{Approximation of the eigenvalues}
\label{sec:eigenvalues}
The eigenvalues of $T_{k}$ (Ritz values) are usually used as approximations
to the eigenvalues of $A$. The quality of the approximation can be
measured using $\beta_{k}$ and the last components of the normalized
eigenvectors of $T_{k}$. In more detail, consider the spectral decomposition
of $T_{k}$ in the form
\[
T_{k}=S_{k}\Theta_{k}S_{k}^{T},\quad\Theta_{k}=\mathrm{diag}\left(\theta_{1}^{(k)},\dots,\theta_{k}^{(k)}\right),\quad S_{k}^{T}S_{k}=S_{k}S_{k}^{T}=I_{k},
\]
where $I_k$ is the identity matrix of order $k$, and assume that the Ritz values are sorted such that
\[
\theta_{1}^{(k)}<\theta_{2}^{(k)}<\cdots<\theta_{k}^{(k)}.
\]
Denote $s_{i,j}^{(k)}$ the entries and $s_{:,j}^{(k)}$ the columns
of $S_{k}$. Then it holds that
\begin{equation}
\min_{i=1,\dots,N}\lvert\lambda_{i}-\theta_{j}^{(k)}\rvert\leq\left\Vert A\left(V_{k}s_{:,j}^{(k)}\right)-\theta_{j}^{(k)}\left(V_{k}s_{:,j}^{(k)}\right)\right\Vert =\beta_{k}\lvert s_{k,j}^{(k)}\rvert,\label{eq:simplebound}
\end{equation}
$j=1,\dots,k$. Since the Ritz values $\theta_{j}^{(k)}$ can be seen
as Rayleigh quotients, one can improve the bound \eqref{eq:simplebound}
using the gap theorem; see \cite[p.~244]{B:Pa1998} or \cite[p.~206]{B:De1997}.
In particular, let $\lambda_{\ell}$ be an eigenvalue of $A$ closest
to $\theta_{j}^{(k)}$. Then
\[
\lvert\lambda_{\ell}-\theta_{j}^{(k)}\rvert\leq\frac{\left(\beta_{k}s_{k,j}^{(k)}\right)^{2}}{\mathrm{gap}_{j}^{(k)}},\qquad\mathrm{gap}_{j}^{(k)}=\min_{i\neq\ell}\lvert\lambda_{i}-\theta_{j}^{(k)}\rvert.
\]

In the following we will be interested in the situation when the smallest
Ritz value $\theta_{1}^{(k)}$ closely approximates the smallest eigenvalue
of $A$. If $\lambda_{1}$ is the eigenvalue of $A$ closest to $\theta_{1}^{(k)}>\lambda_{1}$,
then using the gap theorem and \cite[Corollary 11.7.1 on p.~246]{B:Pa1998},
\begin{equation}
\frac{\left(\beta_{k}s_{k,1}^{(k)}\right)^{2}}{\lambda_{n}-\lambda_{1}}\leq\theta_{1}^{(k)}-\lambda_{1}\leq\frac{\left(\beta_{k}s_{k,1}^{(k)}\right)^{2}}{\lambda_{2}-\theta_{1}^{(k)}},\label{eq:gap}
\end{equation}
giving the bounds
\begin{equation}
\lambda_{2}-\theta_{1}^{(k)}\leq\frac{\left(\beta_{k}s_{k,1}^{(k)}\right)^{2}}{\theta_{1}^{(k)}-\lambda_{1}}\leq\lambda_{n}-\lambda_{1}.\label{eq:bound1}
\end{equation}

It is known (see, for instance, \cite{B:Me2006}) that the squares
of the last components of the eigenvectors are given by
\[
\left(s_{k,j}^{(k)}\right)^{2}=\left\lvert\frac{\chi_{1,k-1}(\theta_{j}^{(k)})}{\chi_{1,k}^{'}(\theta_{j}^{(k)})}\right\rvert,
\]
where $\chi_{1,\ell}$ is the characteristic polynomial of $T_{\ell}$
and $\chi_{1,\ell}^{'}$ denotes its derivative, i.e.,
\[
\left(s_{k,j}^{(k)}\right)^{2}=\frac{\theta_{j}^{(k)}-\theta_{1}^{(k-1)}}{\theta_{j}^{(k)}-\theta_{1}^{(k)}}\cdots\frac{\theta_{j}^{(k)}-\theta_{j-1}^{(k-1)}}{\theta_{j}^{(k)}-\theta_{j-1}^{(k)}}\frac{\theta_{j}^{(k-1)}-\theta_{j}^{(k)}}{\theta_{j+1}^{(k)}-\theta_{j}^{(k)}}\cdots\frac{\theta_{k-1}^{(k-1)}-\theta_{j}^{[k)}}{\theta_{k}^{(k)}-\theta_{j}^{(k)}}.
\]
The right-hand side is positive due to the interlacing property of
the Ritz values for symmetric tridiagonal matrices. In particular,
\begin{equation}
\left(s_{k,1}^{(k)}\right)^{2}=\frac{\theta_{1}^{(k-1)}-\theta_{1}^{(k)}}{\theta_{2}^{(k)}-\theta_{1}^{(k)}}\cdots\frac{\theta_{k-1}^{(k-1)}-\theta_{1}^{(k)}}{\theta_{k}^{(k)}-\theta_{1}^{(k)}}.\label{eq:s1k}
\end{equation}
When the smallest Ritz value $\theta_{1}^{(k)}$ converges to $\lambda_{1}$,
this last component squared converges to zero; see also \eqref{eq:bound1}.

\subsection{Modification of the tridiagonal matrix}

\label{sec:modified}
{Given $\mu < \theta_{1}^{(k)}$, let us consider the
problem of finding the coefficient $\alpha_{k+1}^{(\mu)}$ such that
the modified matrix}
%
\begin{equation}
T_{k+1}^{(\mu)}=\left[\begin{array}{ccccc}
\alpha_{1} & \beta_{1}\\
\beta_{1} & \ddots & \ddots\\
 & \ddots & \ddots & \beta_{k-1}\\
 &  & \beta_{k-1} & \alpha_{k} & \beta_{k}\\
 &  &  & \beta_{k} & \alpha_{k+1}^{(\mu)}
\end{array}\right]\label{eq:Tmu}
\end{equation}
has the prescribed $\mu$ as an eigenvalue.
{The connection of this
problem to Gauss-Radau quadrature rule will be explained in
Section~\ref{sec:estimation}.}
In \cite[pp.~331-334]{Go1973}
it has been shown that at iteration $k+1$
\[
\alpha_{k+1}^{(\mu)}\,=\,\mu+\zeta_{k}^{(\mu)}
\]
where $\zeta_{k}^{(\mu)}$ is the last component of the vector $y$,
{solution of} the linear system
\begin{equation}
(T_{k}-\mu I)y=\beta_{k}^{2}e_{k}.\label{eq-Tkmu-1}
\end{equation}
From \cite[Section 3.4]{MeTi2013}, the modified coefficients $\alpha_{k+1}^{(\mu)}$
can be computed recursively using
\begin{equation}
\alpha_{j+1}^{(\mu)}=\mu+\frac{\beta_{j}^{2}}{\alpha_{j}-\alpha_{j}^{(\mu)}},\qquad\alpha_{1}^{(\mu)}=\mu,\qquad j=1,\dots,k.\label{eq:alphaupdate}
\end{equation}
{Using} the spectral factorization of $T_{k}$, we can now prove
the following lemma.


\begin{lemma}
\label{lem:alphamu} Let $\mu<\theta_{1}^{(k)}$. Then it holds that
\begin{equation}
\alpha_{k+1}^{(\mu)}\,=\,\mu+\sum_{i=1}^{k}\eta_{i,k}^{(\mu)},\qquad\eta_{i,k}^{(\mu)}\equiv\frac{\left(\beta_{k}s_{k,i}^{(k)}\right)^{2}}{\theta_{i}^{(k)}-\mu}.\label{eq:alphaspectral}
\end{equation}
If $\mu<\lambda<\theta_{1}^{(k)}$, then $\alpha_{k+1}^{(\mu)}<\alpha_{k+1}^{(\lambda)}$.
Consequently, if $\mu<\theta_{1}^{(k+1)},$ then $\alpha_{k+1}^{(\mu)}<\alpha_{k+1}$.
\end{lemma}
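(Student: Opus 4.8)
The plan is to prove the three claims in sequence, the first being the engine for the other two. For the spectral formula \eqref{eq:alphaspectral}, I would start from the identity $\alpha_{k+1}^{(\mu)}=\mu+\zeta_{k}^{(\mu)}$ together with the defining system \eqref{eq-Tkmu-1}. Since $\mu<\theta_{1}^{(k)}$, the matrix $T_{k}-\mu I$ is symmetric positive definite, hence invertible, so $\zeta_{k}^{(\mu)}=e_{k}^{T}y=\beta_{k}^{2}\,e_{k}^{T}(T_{k}-\mu I)^{-1}e_{k}$. Substituting $T_{k}=S_{k}\Theta_{k}S_{k}^{T}$ gives $(T_{k}-\mu I)^{-1}=S_{k}(\Theta_{k}-\mu I)^{-1}S_{k}^{T}$, and reading off the $(k,k)$ entry yields $e_{k}^{T}(T_{k}-\mu I)^{-1}e_{k}=\sum_{i=1}^{k}(s_{k,i}^{(k)})^{2}/(\theta_{i}^{(k)}-\mu)$. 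Multiplying by $\beta_{k}^{2}$ and adding $\mu$ is precisely \eqref{eq:alphaspectral}.

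For the monotonicity in the node, I would regard $\mu\mapsto\alpha_{k+1}^{(\mu)}$ as a function on $(-\infty,\theta_{1}^{(k)})$ via \eqref{eq:alphaspectral}. Each term $\eta_{i,k}^{(\mu)}$ has positive denominator $\theta_{i}^{(k)}-\mu$ there (because $\theta_{i}^{(k)}\geq\theta_{1}^{(k)}>\mu$), and this denominator decreases as $\mu$ grows, so $\eta_{i,k}^{(\mu)}$ is nondecreasing in $\mu$; the leading term $\mu$ is strictly increasing. Hence $\mu<\lambda<\theta_{1}^{(k)}$ forces $\alpha_{k+1}^{(\mu)}<\alpha_{k+1}^{(\lambda)}$. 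Equivalently one may differentiate: $\mathrm{d}\alpha_{k+1}^{(\mu)}/\mathrm{d}\mu=1+\sum_{i}(\beta_{k}s_{k,i}^{(k)})^{2}/(\theta_{i}^{(k)}-\mu)^{2}>0$.

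For the corollary, the key observation is that $\alpha_{k+1}^{(\mu)}$ is the \emph{unique} value for which $T_{k+1}^{(\mu)}$ in \eqref{eq:Tmu} has $\mu$ as an eigenvalue: expanding $\det(T_{k+1}^{(\mu)}-\mu I)$ along its last row shows it is affine in $\alpha_{k+1}^{(\mu)}$ with leading coefficient $\det(T_{k}-\mu I)\neq0$, so it has exactly one root. Now the genuine $T_{k+1}$ has $\theta_{1}^{(k+1)}$ as an eigenvalue, and by the strict interlacing of the eigenvalues of consecutive Jacobi matrices, $\theta_{1}^{(k+1)}<\theta_{1}^{(k)}$; thus $\mu=\theta_{1}^{(k+1)}$ is an admissible node and uniqueness gives $\alpha_{k+1}^{(\theta_{1}^{(k+1)})}=\alpha_{k+1}$. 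Applying the monotonicity with $\lambda=\theta_{1}^{(k+1)}$ then yields $\alpha_{k+1}^{(\mu)}<\alpha_{k+1}$ whenever $\mu<\theta_{1}^{(k+1)}$.

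The computations in the first two steps are routine; the only point requiring a little care is the identification $\alpha_{k+1}^{(\theta_{1}^{(k+1)})}=\alpha_{k+1}$, which I expect to be the main (mild) obstacle. It rests entirely on the affine dependence of the characteristic polynomial of $T_{k+1}^{(\mu)}$ on its last diagonal entry and on strict interlacing, both standard for Jacobi matrices.
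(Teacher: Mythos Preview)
Your proposal is correct and follows essentially the same route as the paper: spectral factorization of $T_k$ to obtain \eqref{eq:alphaspectral}, monotonicity of the resulting formula in $\mu$, and then the identification $\alpha_{k+1}=\alpha_{k+1}^{(\theta_1^{(k+1)})}$ via interlacing. Your uniqueness argument for that identification (affine dependence of the characteristic polynomial on the last diagonal entry) is a slightly more explicit version of what the paper dismisses with the phrase ``by construction.''
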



\begin{proof}
Since $\mu<\theta_{1}^{(k)}$ the matrix $T_{k}-\mu I$ in \eqref{eq-Tkmu-1}
is positive definite and, therefore, nonsingular. Hence,
\begin{equation}
\zeta_{k}^{(\mu)}=e_{k}^{T}y=\beta_{k}^{2}e_{k}^{T}(T_{k}-\mu I)^{-1}e_{k}=\sum_{i=1}^{k}\frac{\left(\beta_{k}s_{k,i}^{(k)}\right)^{2}}{\theta_{i}^{(k)}-\mu}\label{eq:zeta}
\end{equation}
so that \eqref{eq:alphaspectral} holds. From \eqref{eq:alphaspectral} it is obvious that if $\mu<\lambda<\theta_{1}^{(k)}$,
then $\alpha_{k+1}^{(\mu)}<\alpha_{k+1}^{(\lambda)}$.

Finally, taking
$\lambda=\theta_{1}^{(k+1)}<\theta_{1}^{(k)}$ (because of the interlacing
of the Ritz values) {we obtain $\alpha_{k+1}^{(\lambda)}=\alpha_{k+1}$ by construction.}
\end{proof}

\section{{CG} and error norm estimation}
\label{sec:estimation}

When solving a linear system $Ax=b$ with a symmetric and positive
definite matrix $A$, the CG method (Algorithm \ref{alg:cg})
\begin{algorithm}[h]
\caption{Conjugate gradient algorithm}
\label{alg:cg}

\begin{algorithmic}[0]

\State \textbf{input} $A$, $b$, $x_{0}$

\State $r_{0}=b-Ax_{0}$

\State $p_{0}=r_{0}$

\For{$k=1,\dots$ until convergence}

\State $\gamma_{k-1}=\frac{r_{k-1}^{T}r_{k-1}}{p_{k-1}^{T}Ap_{k-1}}$

\State $x_{k}=x_{k-1}+\gamma_{k-1}p_{k-1}$

\State $r_{k}=r_{k-1}-\gamma_{k-1}Ap_{k-1}$\hspace*{5mm}
\smash{$\left.\begin{array}{@{}c@{}}\\{}\\{}\\{}\\{}\\{}\end{array}         \right\} \begin{tabular}{l}{\tt cgiter(k-1)}\end{tabular}$}

\State $\delta_{k}=\frac{r_{k}^{T}r_{k}}{r_{k-1}^{T}r_{k-1}}$

\State $p_{k}=r_{k}+\delta_{k}p_{k-1}$

\EndFor

\end{algorithmic}
\end{algorithm}
is the method of choice. In exact arithmetic, the CG iterates $x_{k}$
minimize the $A$-norm of the error over the manifold $x_{0}+\mathcal{K}_{k}(A,r_{0})$,
\[
\|x-x_{k}\|_{A}=\min_{y\in x_{0}+\mathcal{K}_{k}(A,r_{0})}\|x-y\|_{A},
\]
and the residual vectors $r_{k}=b-Ax_{k}$ are proportional to the
Lanczos vectors $v_{j}$,
\[
v_{j+1}=(-1)^{j}\frac{r_{j}}{\|r_{j}\|}\,,\qquad j=0,\dots,k.
\]
Thanks to this close relationship between the CG and Lanczos algorithms,
it can be shown (see, for instance \cite{B:Me2006}) that the recurrence
coefficients computed in both algorithms are connected via $\alpha_{1}=\gamma_{0}^{-1}$
and
\begin{equation}
\beta_{j}=\frac{\sqrt{\delta_{j}}}{\gamma_{j-1}},\quad\alpha_{j+1}=\frac{1}{\gamma_{j}}+\frac{\delta_{j}}{\gamma_{j-1}},\quad j=1,\dots,k-1.\label{eq:CGLanczos}
\end{equation}
Writing \eqref{eq:CGLanczos} in matrix form, we find out that CG
computes implicitly the $LDL^{T}$ factorization $T_{k}=L_{k}D_{k}L_{k}^{T}$,
where
\begin{equation}
L_{k}=\left[\begin{array}{cccc}
1\\
\sqrt{\delta_{1}} & \ddots\\
 & \ddots & \ddots\\
 &  & \sqrt{\delta_{k-1}} & 1
\end{array}\right],\quad D_{k}=\left[\begin{array}{cccc}
\gamma_{0}^{-1}\\
 & \ddots\\
 &  & \ddots\\
 &  &  & \gamma_{k-1}^{-1}
\end{array}\right].\label{eq:matrixLDL}
\end{equation}
Hence the matrix $T_{k}$ is known implicitly in CG.

\subsection{Modification of the factorization of $T_{k+1}$}

Similarly as in Section~\ref{sec:modified} we can ask how to modify
the Cholesky factorization of $T_{k+1}$, that is available in CG,
such that the resulting matrix $T_{k+1}^{(\mu)}$ given implicitly
in factorized form has the prescribed eigenvalue $\mu$. In more detail,
we look for a coefficient $\gamma_{k}^{(\mu)}$ such that
\[
T_{k+1}^{(\mu)}=L_{k+1}\left[\begin{array}{cc}
D_{k}\\
 & \left(\gamma_{k}^{(\mu)}\right)^{-1}
\end{array}\right]L_{k+1}^{T}.
\]
This problem was solved in \cite{MeTi2013} leading to an updating
formula for computing the modified coefficients
\begin{equation}
\gamma_{j+1}^{(\mu)}=\frac{\gamma_{j}^{(\mu)}-\gamma_{j}}{\mu(\gamma_{j}^{(\mu)}-\gamma_{j})+\delta_{j+1}},\ j=1,\dots,k-1,\qquad\gamma_{0}^{(\mu)}=\frac{1}{\mu}.\label{eq:gammamu}
\end{equation}
Moreover, $\gamma_{k}^{(\mu)}$ can be obtained directly from the
modified coefficient $\alpha_{k+1}^{(\mu)}$,
\begin{equation}
\gamma_{k}^{(\mu)}=\frac{1}{\alpha_{k+1}^{(\mu)}-\frac{\delta_{k}}{\gamma_{k-1}}},\label{eq:gammalpha}
\end{equation}
and vice-versa, see \cite[p.~173 and 181]{MeTi2013}.

\subsection{{Quadrature-based bounds in CG}}

{
We now briefly summarize the idea of deriving the quadrature-based bounds used in this paper. For a more detailed
description, see, e.g.,
\cite{GoMe1994,GoSt1994,GoMe1997,StTi2002,StTi2005,MeTi2013,MeTi2014}.

Let $A=Q\Lambda Q^T$ be the spectral decomposition of $A$, with
$Q=[q_1,\dots,q_N]$ orthonormal and $\Lambda=\mathrm{diag}(\lambda_1,\dots,\lambda_N)$. As we said above, for simplicity of notation, we assume that the eigenvalues of $A$
are distinct and ordered as $\lambda_1 < \lambda_2 < \dots < \lambda_N$. Let us define the weights $\omega_{i}$ by
\[
 \omega_{i}\equiv\frac{(r_{0},q_{i})^{2}}{\| r_0 \|^2}\qquad\mbox{so that}\qquad\sum_{i=1}^{N}\omega_{i}=1\,,
\]
and the (nondecreasing) stepwise constant distribution function $\omega(\lambda)$ with a finite number of points of increase
$\lambda_{1},\lambda_{2},\dots,\lambda_{N}$,
\[
 \omega(\lambda)\equiv\;\left\{ \;
 \begin{array}{ccl}
 0 & \textnormal{for} & \lambda<\lambda_{1}\,,\\[1mm]
 \sum_{j=1}^{i}\omega_{j} & \textnormal{for} & \lambda_{i}\leq\lambda<\lambda_{i+1}\,,\quad1\leq i\leq N-1\,.\\[1mm]
 1 & \textnormal{for} & \lambda_{N}\leq\lambda\,.
 \end{array}\right.\,
\]
Having the distribution function $\omega(\lambda)$ and an interval
$\langle \eta,\xi\rangle$ such that $\eta<\lambda_{1}<\lambda_{2}<\dots<\lambda_{N}<\xi$, for any continuous function $f$, one can define  the
Riemann-Stieltjes integral (see, for instance \cite{B:GoMe2010})
\[
 \int_{\eta}^{\xi}f(\lambda)\, d\omega(\lambda) = \sum_{i=1}^{N}\omega_{i}f(\lambda_{i}).
\]
For $f(\lambda)=\lambda^{-1}$, we obtain the integral representation of $\| x-x_0\|_A^2$,
\begin{eqnarray}
\label{eqn:integral}
 \int_{\eta}^{\xi}\lambda^{-1}\, d\omega(\lambda) &=& \|r_0\|^{-2}\| x-x_0\|_A^2.
\end{eqnarray}
Using the optimality of CG it can be shown that CG implicitly determines nodes and weights of the $k$-point Gauss quadrature approximation to
the Riemann-Stieltjes integral \eqref{eqn:integral}. The nodes are given by the eigenvalues of $T_k$, and the
weights by the squared first components of the normalized eigenvectors of $T_k$.
The corresponding Gauss-quadrature rule can be written in the form
\begin{equation}\label{eqn:GQk}
 \int_{\eta}^{\xi}\lambda^{-1}\, d\omega(\lambda) =
 (T_k^{-1})_{1,1} + \frac{\| x-x_k\|_A^2}{\|r_0\|^{2}},
\end{equation}
where $(T_k^{-1})_{1,1}$ represents the Gauss-quadrature approximation,
and the reminder is nothing but the scaled and squared $A$-norm of the $k$th error, i.e., the quantity of our interest.

To approximate the integral \eqref{eqn:integral},
one can also apply a modified quadrature rule.
In this paper we consider the Gauss-Radau quadrature rule
consisting in prescribing a node
$0<\mu \leq \lambda_1$ and choosing the other nodes and weights
to maximize the degree of exactness of the quadrature rule.
We can write the corresponding Gauss-Radau quadrature rule in the form
$$
 \int_{\eta}^{\xi}\lambda^{-1}\, d\omega(\lambda) =
 (({T}_k^{(\mu)})^{-1})_{1,1} + {\mathcal{R}}_k^{(\mu)},
$$
where the reminder ${\mathcal{R}}_k^{(\mu)}$ is negative, and ${T}_k^{(\mu)}$ is given by \eqref{eq:Tmu}.

The idea of deriving (basic) quadrature-based bounds in CG is to consider the  Gauss quadrature rule \eqref{eqn:GQk} at iteration $k$, and a (eventually modified) quadrature rule at iteration $k+1$,
 \begin{equation}
 \frac{\|x-x_{0}\|_{A}^{2}}{\|r_{0} \|^{2}}=\left(\widehat{T}_{k+1}^{-1}\right)_{1,1}+\widehat{\mathcal{R}}_{k+1},\label{eq:Q2}
\end{equation}
where $\widehat{T}_{k+1}=T_{k+1}$ when using the Gauss rule and $\widehat{T}_{k+1}={T}_{k+1}^{(\mu)} $ in the case of using 
the Gauss-Radau rule. From the equations \eqref{eqn:GQk} and \eqref{eq:Q2} we get
\begin{equation}
\|x-x_{k}\|_{A}^{2} = \left[
 \|r_{0} \|^{2} \left(
\left(\widehat{T}_{k+1}^{-1}\right)_{1,1}-\left(T_{k}^{-1}\right)_{1,1}\right) \right] +
\widehat{\mathcal{R}}_{k+1}.\label{eq:mainQ}
\end{equation}
The term in square brackets represents either a lower bound on $\|x-x_{k}\|_{A}^{2}$  if $\widehat{T}_{k+1}=T_{k+1}$ (because of the positive reminder),
or an upper bound if $\widehat{T}_{k+1} = {T}_{k+1}^{(\mu)}$ (because of the negative reminder). In both cases, the term
in square brackets can easily be  evaluated using the available CG related quantities. In particular, the lower bound is given by $\gamma_k \| r_k \|^2$, and the upper bound
by $\gamma_{k}^{(\mu)}\|r_{k}\|^{2}$, where
$\gamma_{k}^{(\mu)}$ can be updated using \eqref{eq:gammamu}.
}

To summarize results of \cite{GoMe1994,GoSt1994,StTi2002}, and
\cite{MeTi2013,MeTi2014,MeTi2019} related to the Gauss and Gauss-Radau
quadrature bounds for the $A$-norm of the error in~CG, it has been
shown that
\begin{equation}
\gamma_{k}\|r_{k}\|^{2}\le\|x-x_{k}\|_{A}^{2}<\gamma_{k}^{(\mu)}\|r_{k}\|^{2}<\left(\frac{\|r_{k}\|^{2}}{\mu\|p_{k}\|^{2}}\right)\|r_{k}\|^{2}\label{eq:basic}
\end{equation}
for $k<n-1$, and $\mu$ such that $0<\mu\leq\lambda_{1}$. Note that
in the special case $k=n-1$ it holds that $\|x-x_{n-1}\|_{A}^{2}=\gamma_{n-1}\|r_{n-1}\|^{2}$.
If the initial residual $r_{0}$ has a nontrivial component in the
eigenvector corresponding to $\lambda_{1}$, then $\lambda_{1}$ is
an eigenvalue of $T_{n}$. If in addition $\mu$ is chosen such that
$\mu=\lambda_{1}$, then $\gamma_{n-1}=\gamma_{n-1}^{(\mu)}$ and
the second inequality in \eqref{eq:basic} changes to equality.
The last inequality is strict also for $k=n-1$.

The rightmost bound in \eqref{eq:basic},
{that will be called the \emph{simple upper bound} in the following,}
was derived in \cite{MeTi2019}.
The norm $\Vert p_k\Vert$ is not available in CG, but the ratio
\[
\phi_{k}=\frac{\left\Vert r_{k}\right\Vert ^{2}}{\left\Vert p_{k}\right\Vert ^{2}}
\]
can be computed efficiently using
\begin{equation}
\phi_{j+1}^{-1}=1+\phi_{j}^{-1}\delta_{j+1},\qquad\phi_{0}=1.\label{eq:phiupdate}
\end{equation}
{Note that at an iteration $\ell\leq k$ we can obtain a more accurate bound}
using
\begin{equation}
\|x-x_{\ell}\|_{A}^{2}=\sum_{j=\ell}^{k-1}\gamma_{j}\left\Vert r_{j}\right\Vert ^{2}+\|x-x_{k}\|_{A}^{2},\label{eq:delay}
\end{equation}
by applying the basic bounds
\eqref{eq:basic} to the last term in \eqref{eq:delay}; see \cite{MeTi2019}
for details on the construction of more accurate bounds. In practice,
however, one runs the CG algorithm, and estimates the error in a backward
way, i.e., $k-\ell$ iterations back. The adaptive choice of the delay
$k-\ell$ when using the Gauss quadrature lower bound was discussed recently
in \cite{MePaTi2021}.

In the following we will we concentrate on the analysis of the behaviour
of the basic Gauss-Radau upper {bound
\begin{equation}
\gamma_{k}^{(\mu)}\|r_{k}\|^{2}\label{eq:GR}
\end{equation}
in dependence of the choice of $\mu$. As already mentioned,
we observed in many examples that in earlier iterations,
the bound is approximating the squared $A$-norm of the error quite well,
but in later iterations it becomes worse,
it is delayed and almost independent of $\mu$.
We observed that this phenomenon  is related to the convergence
of the smallest Ritz value to the smallest eigenvalue $\lambda_1$.
In particular, the bound is getting worse if
the smallest Ritz value approximates $\lambda_1$ better than $\mu$.
This often happens during finite precision computations when convergence of CG is delayed because of rounding errors and there are clusters of Ritz values approximating individual eigenvalues of $A$.
Usually, such clusters arise around the largest eigenvalues.
At some iteration, each eigenvalue of $A$ can be approximated by a Ritz value,
while the $A$-norm of the error still does not reach the required level of accuracy,
and the process will continue and place more and more Ritz values in the clusters. In this situation, it can happen that $\lambda_1$
is tightly (that is, to a high relative accuracy) approximated by a Ritz value  while the CG process still continues. Note that if $A$ has well separated eigenvalues and we run the experiment in exact arithmetic, then $\lambda_1$ is usually
tightly approximated by a Ritz value only in the last iterations.
The above observation is key for constructing a motivating example,
in which we can readily observe the studied phenomenon also
in exact arithmetic, and which will motivate our analysis.}

\section{The model problem and a numerical experiment}

\label{sec:model}

In the construction of the motivating example we use results presented
in \cite{Gr1989,GrSt1992,B:Me2006,OLStTi2007,St1991}. Based on the
work by Chris Paige \cite{Pa1980a}, Anne Greenbaum \cite{Gr1989}
proved that the results of finite precision CG computations can be
interpreted (up to some small inaccuracies) as the results of the
exact CG algorithm applied to a larger system with the system matrix
having many eigenvalues distributed throughout ``tiny'' intervals
around the eigenvalues of the original matrix. The experiments show
that ``tiny'' means of the size comparable to $\mathbf{u}\|A\|$,
where $\mathbf{u}$ is the roundoff unit. This result was used in
\cite{GrSt1992} to predict the behavior of finite precision CG. Inspired
by \cite{Gr1989,GrSt1992,OLStTi2007} we will construct a linear system
$Ax=b$ with similar properties as the one suggested by Greenbaum~\cite{Gr1989}.
However, we want to emphasize and visualize some phenomenons concerning
the behaviour of the basic Gauss-Radau upper bound \eqref{eq:GR}.
Therefore, we choose the size of the intervals around the original
eigenvalues larger than $\mathbf{u}\|A\|$.

We start with the test problem $\Lambda y=w$ from \cite{St1991},
where $w=m^{-1/2}(1,\dots,1)^{T}$ and $\Lambda=\mathrm{diag}(\hat{\lambda}_{1},\dots,\hat{\lambda}_{m})$,
\begin{equation}
\hat{\lambda}_{i}=\hat{\lambda}_{1}+\frac{i-1}{m-1}(\hat{\lambda}_{m}-\hat{\lambda}_{1})\rho^{m-i},\quad i=2,\ldots,m.\label{eq:strakos}
\end{equation}
The diagonal matrix $\Lambda$ and the vector $w$ determine the stepwise
distribution function $\omega(\lambda)$ with points of increase $\hat{\lambda}_{i}$
and the individual jumps (weights) $\omega_{j}=m^{-1}$,

\begin{equation}
\omega(\lambda)\equiv\;\left\{ \;\begin{array}{ccl}
0 & \textnormal{for} & \lambda<\hat{\lambda}_{1}\,,\\[1mm]
\sum_{j=1}^{i}\omega_{j} & \textnormal{for} & \hat{\lambda}_{i}\leq\lambda<\hat{\lambda}_{i+1}\,,\quad1\leq i\leq m-1\,,\\[1mm]
1 & \textnormal{for} & \hat{\lambda}_{m}\leq\lambda\,.
\end{array}\right.\,\label{eq:schema_hermit}
\end{equation}

We construct a blurred distribution function $\widetilde{\omega}(\lambda)$
having clusters of points of increase around the original eigenvalues
$\hat{\lambda}_{i}$. We consider each cluster to have the same radius
$\delta$, and let the number $c_i$ of points in the $i$th cluster
grow linearly from 1 to $p$,
\[
c_{i}=\mathrm{round}\left(\frac{p-1}{m-1}i+\frac{m-p}{m-1}\right),\quad i=1,\dots,m.
\]
The blurred eigenvalues
\[
\widetilde{\lambda}_{j}^{(i)},\quad j=1,\dots,c_{i},
\]
are uniformly distributed in $[\hat{\lambda}_{i}-\delta,\hat{\lambda}_{i}+\delta]$,
with the corresponding weights given by
\[
\widetilde{\omega}_{j}^{(i)}=\frac{\omega_{i}}{c_{i}}\quad j=1,\dots,c_{i},
\]
i.e., the weights that correspond to the $i$th cluster are equal,
and their sum is $\omega_{i}$. Having defined the blurred distribution
function $\widetilde{\omega}(\lambda)$ we can construct the corresponding
Jacobi matrix $T\in{\mathbb R}^{N\times N}$
{in a numerically stable way}
using the Gragg and Harrod
rkpw algorithm \cite{GrHa1984}.
{Note that the mapping
from the nodes and weights of the computed quadrature to the recurrence
coefficients
is generally well-conditioned \cite[p.~59]{B:Gaut2004}}.
To construct the above mentioned Jacobi matrix $T$ we used Matlab's
vpa arithmetic with 128 digits. Finally, we define the \emph{double
precision data} $A$ and $b$ that will be used for experimenting
as
\begin{equation}
A=\mathrm{double}(T),\quad b=e_{1},\label{eq:Abdouble}
\end{equation}
where $e_{1}\in{\mathbb R}^{N}$ is the first column of the identity matrix.
We decided to use double precision input data
{since
we can easily compare results of our computations
performed in Matlab's vpa arithmetic with results obtained using double
precision arithmetic for the same input data.}

In our experiment we choose $m=12$, $\hat{\lambda}_{1}=10^{-6}$,
$\hat{\lambda}_{m}=1$, $\rho=0.8$, $\delta=10^{-10}$, and $p=4$,
resulting in $N=30.$ Let us run the ``exact'' CGQ
algorithm of \cite{MeTi2013} on the model problem \eqref{eq:Abdouble}
constructed above, where exact arithmetic is simulated using Matlab's
variable precision with \texttt{digits=128}. Let
$\lambda_{1}$ be the exact smallest eigenvalue of $A$. We use four
different values of $\mu$ for the computation of the Gauss-Radau
upper bound \eqref{eq:GR}: $\mu_{3}=(1-10^{-3})\lambda_{1}$, $\mu_{8}=(1-10^{-8})\lambda_{1}$,
$\mu_{16}$ which denotes the double precision number closest to $\lambda_{1}$
such that $\mu_{16}\leq\lambda_{1}$, and $\mu_{50}=(1-10^{-50})\lambda_{1}$
which is almost like the exact value. Note that $\gamma_{k}^{(\mu)}$
is updated using \eqref{eq:gammamu}.


Figure~\ref{fig-0} shows the $A$-norm of the error $\|x-x_{k-1}\|_{A}$
(solid curve), the upper bounds for the considered values of $\mu_{i}$,
$i=3,8,16,50$ (dotted solid curves), {and
the rightmost bound in \eqref{eq:basic} (the simple upper bound)
for $\mu_{50}$ (dashed curve).} The dots represent the values
$\theta_{1}^{(k)}-\lambda_{1}$, i.e., the distances of the smallest
Ritz values $\theta_{1}^{(k)}$ to $\lambda_{1}$. The horizontal
dotted lines correspond to the values of $\lambda_{1}-\mu_{i}$, $i=3,8,16$.

\begin{figure}[!htbp]
\centering{}\includegraphics[width=11cm]{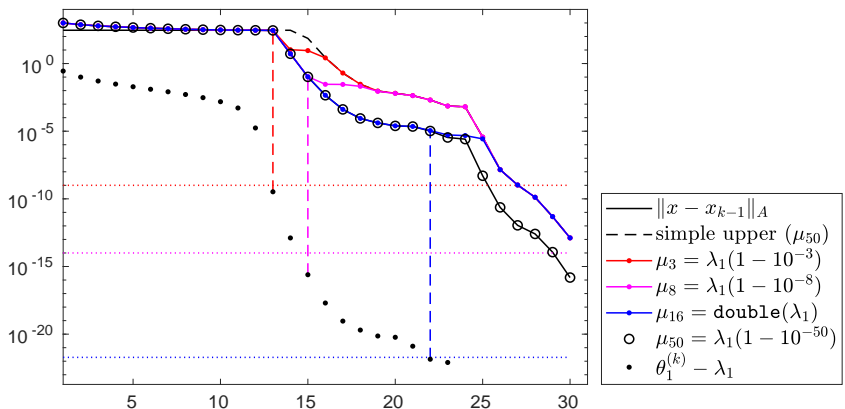} \caption{$\|x-x_{k-1}\|_{A}$, upper bounds and the distance of $\theta_{1}^{(k)}$
to $\lambda_{1}$, {\tt digits=128}.}
\label{fig-0}
\end{figure}


The {Gauss-Radau} upper bounds in Figure~\ref{fig-0} first overestimate, and then
closely approximate $\|x-x_{k-1}\|_{A}$ (starting from iteration
5). However, at some point, the {Gauss-Radau}
upper bounds start to differ significantly
from $\|x-x_{k-1}\|_{A}$ and represent {worse approximations},
except for $\mu_{50}$.
We observe that for a given $\mu_{i}$, $i=3,8,16$,
the upper bounds are delayed when the distance of $\theta_{1}^{(k)}$
to $\lambda_{1}$ becomes smaller than the distance of $\mu_{i}$
to $\lambda_{1}$, i.e., when
\begin{equation}
\theta_{1}^{(k)}-\lambda_{1}<\lambda_{1}-\mu_{i}.\label{eq:first}
\end{equation}
If \eqref{eq:first} holds, then the smallest Ritz value $\theta_{1}^{(k)}$
is a better approximation to $\lambda_{1}$ than~$\mu_{i}$. This
moment is emphasized using vertical dashed lines that connect the
value $\theta_{1}^{(k)}-\lambda_{1}$ with $\|x-x_{k-1}\|_{A}$ in
the first iteration $k$ such that \eqref{eq:first} holds. Moreover,
below a certain level, the upper bounds become almost independent
of $\mu_{i}$, $i=3,8,16$, and
{visually coincide with
the simple upper bound.}
The closer is $\mu$ to $\lambda_1$, the later
this phenomenon occurs.

Depending on the validity of \eqref{eq:first}, we distinguish between
\emph{phase~1} and\emph{ phase~2 }of convergence. If the inequality
\eqref{eq:first} does not hold, i.e., if $\mu$ is a better approximation
to $\lambda_{1}$ than the smallest Ritz value, then we say we are
in phase~1.
If \eqref{eq:first} holds,
then the smallest Ritz value is closer to $\lambda_{1}$ than $\mu$
and we are in phase~2.
Obviously,
the beginning of phase 2 depends on the choice of $\mu$ and on the convergence of the smallest Ritz value to the smallest eigenvalue. Note that for $\mu=\mu_{50}$ we are always in phase~1 before we stop the iterations.

{In the given experiment as well as in many practical problems, the delay of the upper bounds is not large (just a few iterations), and the bounds can still provide a useful information for stopping the algorithm. However, we have also encountered examples where
the delay of the Gauss-Radau upper bound  was about 200 iterations; see, e.g.,
\cite[Fig.~10]{MeTi2019}
or \cite[Fig.~2]{MePaTi2021} concerning the matrix {\tt s3dkt3m2}.
Hence, we believe that this phenomenon deserves attention and explanation.}

%
%
%
%
%
%
%
%
%
%
%
%
%
%
%

\section{Analysis}
\label{sec:analysis}
The upper bounds are computed from the modified tridiagonal matrices
\eqref{eq:Tmu} discussed in Section~\ref{sec:modified}, that differ
only in one coefficient at the position $(k+1,k+1)$. Therefore, the
first step of the analysis is to understand how the choice of $\mu$
and the validity of the condition \eqref{eq:first} influences the
value of the modified coefficient
\begin{eqnarray}
\alpha_{k+1}^{(\mu)} & = & \mu+\sum_{i=1}^{k}\eta_{i,k}^{(\mu)},\qquad\eta_{i,k}^{(\mu)}=\frac{\left(\beta_{k}s_{k,i}^{(k)}\right)^{2}}{\theta_{i}^{(k)}-\mu};\label{eq:terms}
\end{eqnarray}
see \eqref{eq:alphaspectral}. We will compare its value to a modified
coefficient for which phase~2 does not occur; see Figure~\ref{fig-0}
for $\mu_{50}$.

Based on that understanding we will then address further important
questions. First, our aim is to explain the behaviour of the basic
Gauss-Radau upper bound \eqref{eq:GR} in phase~2, in particular,
{its closeness to the simple upper bound \eqref{eq:basic}}. Second,
for practical reasons, without knowing $\lambda_{1}$, we would like to be
able to detect phase~2, i.e., the first iteration $k$ for which
the inequality \eqref{eq:first} starts to hold. Finally, we address
the problem of how to improve the accuracy of the basic Gauss-Radau
upper bound \eqref{eq:GR} in phase 2.


We first analyze the relation between two modified coefficients $\alpha_{k+1}^{(\mu)}$
and $\alpha_{k+1}^{(\lambda)}$ where $0<\mu<\lambda<\theta_{1}^{(k)}.$


\begin{lemma}
\label{lem:lemma2}Let $0<\mu<\lambda<\theta_{1}^{(k)}$. Then
\begin{equation}
\frac{\eta_{i,k}^{(\lambda)}-\eta_{i,k}^{(\mu)}}{\eta_{i,k}^{(\mu)}}=\frac{\lambda-\mu}{\theta_{i}^{(k)}-\lambda}\label{eq:sensitivity}
\end{equation}
and
\begin{equation}
\alpha_{k+1}^{(\lambda)}-\alpha_{k+1}^{(\mu)}=\left(\frac{\lambda-\mu}{\theta_{1}^{(k)}-\mu}\right)\eta_{1,k}^{(\lambda)}+\left(\lambda-\mu\right)E_{k}^{(\lambda,\mu)}\label{eq:diff1}
\end{equation}
where
\begin{equation}
E_{k}^{(\lambda,\mu)}\equiv1+\sum_{i=2}^{k}\frac{\eta_{i,k}^{(\lambda)}}{\theta_{i}^{(k)}-\mu}\label{eq:Ek}
\end{equation}
satisfies $E_{k}^{(\lambda,\mu)}=E_{k}^{(\mu,\lambda)}.$
\end{lemma}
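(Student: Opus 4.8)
The plan is to work directly from the spectral formula \eqref{eq:terms} for the modified coefficients. For the first identity \eqref{eq:sensitivity}, I would simply write out the definitions
\[
\eta_{i,k}^{(\lambda)}=\frac{(\beta_k s_{k,i}^{(k)})^2}{\theta_i^{(k)}-\lambda},\qquad
\eta_{i,k}^{(\mu)}=\frac{(\beta_k s_{k,i}^{(k)})^2}{\theta_i^{(k)}-\mu},
\]
take the difference over a common denominator, and observe that the factor $(\beta_k s_{k,i}^{(k)})^2$ cancels when dividing by $\eta_{i,k}^{(\mu)}$, leaving $\dfrac{(\theta_i^{(k)}-\mu)-(\theta_i^{(k)}-\lambda)}{\theta_i^{(k)}-\lambda}=\dfrac{\lambda-\mu}{\theta_i^{(k)}-\lambda}$. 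This is a one-line computation; the only thing to note is that $\theta_i^{(k)}-\lambda>0$ for all $i$ since $\lambda<\theta_1^{(k)}\le\theta_i^{(k)}$, so no denominator vanishes.

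For \eqref{eq:diff1}, the idea is to use \eqref{eq:alphaspectral} to write $\alpha_{k+1}^{(\lambda)}-\alpha_{k+1}^{(\mu)}=(\lambda-\mu)+\sum_{i=1}^k(\eta_{i,k}^{(\lambda)}-\eta_{i,k}^{(\mu)})$, and then rewrite each term $\eta_{i,k}^{(\lambda)}-\eta_{i,k}^{(\mu)}$ using \eqref{eq:sensitivity} as $\eta_{i,k}^{(\mu)}\cdot\dfrac{\lambda-\mu}{\theta_i^{(k)}-\lambda}$. Now $\eta_{i,k}^{(\mu)}=(\theta_i^{(k)}-\lambda)\,\eta_{i,k}^{(\mu)}/(\theta_i^{(k)}-\lambda)$ isn't quite what I want; instead I would use the cleaner relation $\dfrac{\eta_{i,k}^{(\mu)}}{\theta_i^{(k)}-\lambda}=\dfrac{(\beta_k s_{k,i}^{(k)})^2}{(\theta_i^{(k)}-\mu)(\theta_i^{(k)}-\lambda)}=\dfrac{\eta_{i,k}^{(\lambda)}}{\theta_i^{(k)}-\mu}$, which is symmetric in $\lambda,\mu$. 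Hence $\eta_{i,k}^{(\lambda)}-\eta_{i,k}^{(\mu)}=(\lambda-\mu)\dfrac{\eta_{i,k}^{(\lambda)}}{\theta_i^{(k)}-\mu}$, and summing,
\[
\alpha_{k+1}^{(\lambda)}-\alpha_{k+1}^{(\mu)}=(\lambda-\mu)\left(1+\sum_{i=1}^k\frac{\eta_{i,k}^{(\lambda)}}{\theta_i^{(k)}-\mu}\right).
\]
Splitting off the $i=1$ term gives $(\lambda-\mu)\dfrac{\eta_{1,k}^{(\lambda)}}{\theta_1^{(k)}-\mu}+(\lambda-\mu)E_k^{(\lambda,\mu)}$ with $E_k^{(\lambda,\mu)}$ exactly as defined in \eqref{eq:Ek}, which is the claimed formula.

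Finally, the symmetry $E_k^{(\lambda,\mu)}=E_k^{(\mu,\lambda)}$ follows from the same symmetric relation: in $E_k^{(\lambda,\mu)}=1+\sum_{i=2}^k \dfrac{\eta_{i,k}^{(\lambda)}}{\theta_i^{(k)}-\mu}$ each summand equals $\dfrac{(\beta_k s_{k,i}^{(k)})^2}{(\theta_i^{(k)}-\lambda)(\theta_i^{(k)}-\mu)}$, which is manifestly invariant under swapping $\lambda$ and $\mu$. I do not anticipate any real obstacle here — the whole lemma is essentially bookkeeping around the spectral representation \eqref{eq:alphaspectral}; the only point requiring a word of care is recording why all the denominators $\theta_i^{(k)}-\mu$ and $\theta_i^{(k)}-\lambda$ are positive (hence nonzero), which is immediate from $0<\mu<\lambda<\theta_1^{(k)}\le\theta_i^{(k)}$.
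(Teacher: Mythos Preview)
Your proposal is correct and follows essentially the same approach as the paper: both start from the spectral representation \eqref{eq:alphaspectral}, use the symmetric identity $\dfrac{\eta_{i,k}^{(\lambda)}}{\theta_i^{(k)}-\mu}=\dfrac{\eta_{i,k}^{(\mu)}}{\theta_i^{(k)}-\lambda}$ to obtain \eqref{eq:sensitivity} and the symmetry of $E_k^{(\lambda,\mu)}$, then sum the differences $\eta_{i,k}^{(\lambda)}-\eta_{i,k}^{(\mu)}$ and split off the $i=1$ term to arrive at \eqref{eq:diff1}. The only cosmetic distinction is that the paper routes the sum through $\sum_i \eta_{i,k}^{(\mu)}/(\theta_i^{(k)}-\lambda)$ before invoking the symmetry, whereas you go directly to $\sum_i \eta_{i,k}^{(\lambda)}/(\theta_i^{(k)}-\mu)$; the content is identical.
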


\begin{proof} {
From the definition of $\eta_{i,k}^{(\mu)}$ and $\eta_{i,k}^{(\lambda)}$,
it follows immediately
\[
\frac{\eta_{i,k}^{(\lambda)}}{\theta_{i}^{(k)}-\mu}=\frac{\eta_{i,k}^{(\mu)}}{\theta_{i}^{(k)}-\lambda},
\]
which implies $E_{k}^{(\lambda,\mu)}=E_{k}^{(\mu,\lambda)}$ and
\eqref{eq:sensitivity}.

Note that $0<\eta_{i,k}^{(\mu)}<\eta_{i,k}^{(\lambda)}$. Using \eqref{eq:sensitivity}, the difference of the coefficients $\alpha$'s is
\begin{eqnarray*}
\alpha_{k+1}^{(\lambda)}-\alpha_{k+1}^{(\mu)} & = & \left(\lambda-\mu\right)+\sum_{i=1}^{k}\left(\eta_{i,k}^{(\lambda)}-\eta_{i,k}^{(\mu)}\right)\\
 & = & {
(\lambda-\mu)+(\lambda-\mu)\sum_{i=1}^{k}\frac{\eta_{i,k}^{(\mu)}}{\theta_{i}^{(k)}-\lambda}} \\
%
 & = &
{ (\lambda-\mu)\frac{\eta_{1,k}^{(\lambda)}}{\theta_{1}^{(k)}-\mu} +
(\lambda-\mu)\left(1+\sum_{i=2}^{k}\frac{\eta_{i,k}^{(\mu)}}{\theta_{i}^{(k)}-\lambda}\right) }
\end{eqnarray*}
which implies \eqref{eq:diff1}. }
\end{proof}

\subsection{Assumptions}\label{sec:assumptions}
{Let us describe in more detail the situation we are interested in. In the analysis that follows we will assume implicitly the following.
\begin{enumerate}
\item  $\lambda_{1}$ is well separated from $\lambda_{2}$ so that we can use the gap theorem mentioned in Section~\ref{sec:eigenvalues}, in particular relation
\eqref{eq:bound1} bounding $\eta_{1,k}^{(\lambda_1)}$.

\item $\mu$ is a tight underestimate to $\lambda_{1}$ such that
\begin{equation}
\lambda_{1}-\mu\ll\lambda_{2}-\lambda_{1}.\label{eq:separate}
\end{equation}

\item The smallest Ritz value $\theta_1^{(k)}$ converges to $\lambda_1$ with increasing $k$ so that there
 is an iteration index $k$ from which
$$
\theta_1^{(k)}-\lambda_1 \ll \lambda_1-\mu.
$$
\end{enumerate}

Let us briefly comment on these assumptions. The assumption
that $\lambda_{1}$ is well separated from $\lambda_{2}$ is used later
to prove that $\eta_{1,k}^{(\lambda_1)}$ is bounded away from zero;
see \eqref{eq:boundedaway}.
If there is a cluster of eigenvalues around $\lambda_1$, one can still observe the discussed phenomenon of loss of accuracy of the upper bound, but a theoretical analysis would be much more complicated. Note that the first assumption is also often satisfied
for a system matrix $\hat{A}$ that models finite precision CG behaviour,
if the original matrix $A$ has well separated eigenvalues $\lambda_1$ and $\lambda_2$.
Using results of Greenbaum \cite{Gr1989} we know that $\hat{A}$ can have
many eigenvalues distributed throughout tiny intervals around the eigenvalues of $A$. We have constructed the model matrix $\hat{A}$ in many numerical experiments,
using the procedure suggested in \cite{Gr1989}. We found out that the constructed $\hat{A}$ has usually clusters of eigenvalues around the larger eigenvalues of $A$ while
a smaller eigenvalue of $A$ is usually approximated by just one eigenvalue of $\hat{A}$. Therefore, the analysis presented below can then be applied to the matrix $\hat{A}$ that models the finite precision CG behavior.


If $\mu$ is not a tight underestimate, then the Gauss-Radau upper bound
is usually not a very good approximation of the $A$-norm of the error. Then the condition \eqref{eq:first} can hold from the beginning and
phase~1 need not happen.

Finally, in theory, the smallest Ritz value need not converge to $\lambda_1$
until the last iteration \cite{Sc1979}. But, in that case, there won't be any problem for the Gauss-Radau upper bound. However, in practical computations,
we very often observe the convergence of $\theta_1^{(k)}$ to $\lambda_1$. In particular,
in cases of matrices $\hat{A}$ with clustered eigenvalues that model finite
precision behavior of CG,
$\theta_1^{(k)}$ approximates $\lambda_1$ to a high relative accuracy usually earlier before the $A$-norm of the error reaches the ultimate level of accuracy.}

\subsection{{The modified coefficient $\alpha_{k+1}^{(\mu)}$}}
\label{sec:alphamu}

{Below we} would like to compare $\alpha_{k+1}^{(\lambda_{1})}$ for which
phase~2 does not occur with $\alpha_{k+1}^{(\mu)}$for which phase~2
occurs; see Figure~\ref{fig-0}. Using \eqref{eq:sensitivity} and
\eqref{eq:separate}, we are able to compare the individual $\eta$-terms.
In particular, for $i>1$ we {get}
\[
\frac{\eta_{i,k}^{(\lambda_{1})}-\eta_{i,k}^{(\mu)}}{\eta_{i,k}^{(\mu)}}=
{\frac{\lambda_{1}-\mu}{\theta_{i}^{(k)}-\lambda_{1}}}
<\frac{\lambda_{1}-\mu}{\lambda_{2}-\lambda_{1}}\ll1,
\]
{where we have used $\theta_i^{(k)} > \lambda_2$ for $i>1$. Therefore,}
\[
\eta_{i,k}^{(\lambda_{1})}\approx\eta_{i,k}^{(\mu)}\quad\mbox{for}\quad i>1.
\]
Hence, $\alpha_{k+1}^{(\mu)}$ can significantly differ from $\alpha_{k+1}^{(\lambda_{1})}$
only in the first term of the sum in \eqref{eq:terms} for which
\begin{equation}
\frac{\eta_{1,k}^{(\lambda_{1})}-\eta_{1,k}^{(\mu)}}{\eta_{1,k}^{(\mu)}}=\frac{\lambda_{1}-\mu}{\theta_{1}^{(k)}-\lambda_{1}}.\label{eq:compare1}
\end{equation}

If $\theta_{1}^{(k)}$ is a better approximation to $\lambda_{1}$
than $\mu$ in the sense of \eqref{eq:first},
then \eqref{eq:compare1} shows that $\eta_{1,k}^{(\lambda_{1})}$
can be much larger than $\eta_{1,k}^{(\mu)}$. As a consequence, $\alpha_{k+1}^{(\lambda_{1})}$
can differ significantly from $\alpha_{k+1}^{(\mu)}$ . On the other
hand, if $\mu$ is chosen such that
\[
\lambda_{1}-\mu\ll\theta_{1}^{(k)}-\lambda_{1},
\]
for all $k$ we are interested in, then phase 2 will not occur, and
{
\[
\alpha_{k+1}^{(\lambda_1)}-\alpha_{k+1}^{(\mu)}  =  \left(\lambda_1-\mu\right)+\sum_{i=1}^{k}\left(\eta_{i,k}^{(\lambda_1)}-\eta_{i,k}^{(\mu)}\right)
\approx
0\,,
\]
since $\mu$ is assumed to be a tight approximation to $\lambda_1$ and
$\eta_{i,k}^{(\lambda_1)}\approx \eta_{i,k}^{(\mu)}$ for all~$i$.}

In the following we discuss phase~1 and phase~2 in more detail.
%
%
%
In phase~1,
\[
\lambda_{1}-\mu<\theta_{1}^{(k)}-\lambda_{1},
\]
and, therefore, all components $\eta_{i,k}^{(\mu)}$ (including $\eta_{1,k}^{(\mu)}$)
are not sensitive to small changes of $\mu$; see \eqref{eq:sensitivity}.
In other words, the coefficients $\alpha_{k+1}^{(\mu)}$ are approximately
the same for various choices of $\mu$.

Let us denote
\[
{h_k}=\frac{\lambda_{1}-\mu}{\theta_{1}^{(k)}-\lambda_{1}}<1.
\]
In fact, we can write $\theta_{1}^{(k)}-\mu=\theta_{1}^{(k)}-\lambda_{1}+\lambda_{1}-\mu$
and use the Taylor expansion of $1/(1+{h_k})$. It yields
\begin{eqnarray*}
\frac{1}{\theta_{1}^{(k)}-\mu} & = & \frac{1}{\theta_{1}^{(k)}-\lambda_{1}}{\left(\frac{1}{h_k+1}\right)=\frac{1}{\theta_{1}^{(k)}-\lambda_{1}}\left[1-h_k+h_k^{2}-h_k^{3}+\cdots\right]}.
\end{eqnarray*}
{Obviously, $h_k$ is an increasing function of the iteration number $k$; the numerator
is constant while the denominator is decreasing in absolute value. The size of $h_k$ depends
also on how well $\mu$ approximates $\lambda_1$. If $\mu$ is a tight approximation
to $\lambda_1$, then, at the beginning of the CG iterations, the denominator of $h_k$
can be large compared to the numerator, $h_k$} is small and the
right-hand side of $1/(\theta_{1}^{(k)}-\mu)$ is almost given by
$1/(\theta_{1}^{(k)}-\lambda_{1})$, independent of $\mu$. {We observed
that the first term of the sum of the $\eta_{i,k}^{(\mu)}$ is then usually the largest
one.}


{Let us now discuss phase~2.}
First recall that for any $0<\mu<\lambda_{1}$ it holds that
\begin{equation}
\alpha_{k+1}^{(\mu)}<\alpha_{k+1}^{(\lambda_{1})}\quad\mbox{and}\quad\eta_{1,k}^{(\mu)}<\eta_{1,k}^{(\lambda_{1})}.\label{eq:coefficients}
\end{equation}
As before, suppose that $\lambda_{1}$ is well separated from $\lambda_{2}$
and that \eqref{eq:separate} holds. Phase~2 begins when $\theta_{1}^{(k)}$
is a better approximation to $\lambda_{1}$ than $\mu$, i.e., when
\eqref{eq:first} holds. Since $\theta_{1}^{(k)}$ is a tight approximation
to $\lambda_{1}$ in phase~2, \eqref{eq:bound1} and
\eqref{eq:first} imply that
\begin{equation}\label{eq:boundedaway}
\eta_{1,k}^{(\lambda_{1})}\geq\lambda_{2}-\theta_{1}^{(k)}
= \lambda_{2}-\lambda_{1} + \lambda_1-\theta_{1}^{(k)} >
 (\lambda_{2}-\lambda_{1}) - (\lambda_1-\mu).
\end{equation}
Therefore, using \eqref{eq:separate},
$\eta_{1,k}^{(\lambda_{1})}$ is bounded away from zero.
On the other hand, \eqref{eq:bound1} also implies that
\[
\eta_{1,k}^{(\mu)}=\frac{\theta_{1}^{(k)}-\lambda_{1}}{\theta_{1}^{(k)}-\mu}\eta_{1,k}^{(\lambda_{1})}\leq\frac{\theta_{1}^{(k)}-\lambda_{1}}{\theta_{1}^{(k)}-\mu}\left(\lambda_{n}-\lambda_{1}\right)
\]
and as $\theta_{1}^{(k)}$ converges to $\lambda_{1}$, $\eta_{1,k}^{(\mu)}$
goes to zero. Therefore,
\[
\alpha_{k+1}^{(\mu)}\approx\mu+\sum_{i=2}^{k}\eta_{i,k}^{(\mu)},
\]
and the sum on the right-hand side is almost independent of $\mu$. Note that having two values $0<\mu<\lambda<\lambda_{1}$ such that
\begin{equation}
\theta_{1}^{(k)}-\lambda_{1}<\lambda_{1}-\lambda\quad\mbox{and}\quad\lambda-\mu\ll\lambda_{2}-\lambda_{1},\label{eq:assumption2}
\end{equation}
then one can expect that
\begin{equation}
\alpha_{k+1}^{(\mu)}\approx\alpha_{k+1}^{(\lambda)}\label{eq:predict-1}
\end{equation}
because $\eta_{1,k}^{(\mu)}$ and $\eta_{1,k}^{(\lambda)}$ converge
to zero and $\eta_{i,k}^{(\mu)}\approx\eta_{i,k}^{(\lambda)}$ for
$i>1$ due to
\[
\frac{\eta_{i,k}^{(\lambda)}-\eta_{i,k}^{(\mu)}}{\eta_{i,k}^{(\mu)}}=\frac{\lambda-\mu}{\theta_{i}^{(k)}-\lambda}<\frac{\lambda-\mu}{\lambda_{2}-\lambda_{1}}\ll1,
\]
where we have used \eqref{eq:sensitivity} and the assumption \eqref{eq:assumption2}.
Therefore, $\alpha_{k+1}^{(\mu)}$ is relatively insensitive to small
changes of $\mu$ and the same is true for the upper bound \eqref{eq:GR}.

\subsection{The coefficient $\alpha_{k+1}$\label{subsec:alpha}}

The coefficient $\alpha_{k+1}$ can also be written as
\[
\alpha_{k+1}=\alpha_{k+1}^{(\mu)}\quad\mbox{for}\quad\mu=\theta_{1}^{(k+1)},
\]
and the results of Lemma~\ref{lem:alphamu} and Lemma~\ref{lem:lemma2}
are still valid, even though, in practice, $\mu$ must be smaller
than $\lambda_{1}$. Using \eqref{eq:diff1} we can express the differences
between the coefficients, it holds that
\begin{eqnarray}
\alpha_{k+1}-\alpha_{k+1}^{(\lambda_{1})} & = & \eta_{1,k}^{(\lambda_{1})}\frac{\theta_{1}^{(k+1)}-\lambda_{1}}{\theta_{1}^{(k)}-\theta_{1}^{(k+1)}}+\left(\theta_{1}^{(k+1)}-\lambda_{1}\right)E_{k}^{(\theta_{1}^{(k+1)},\lambda_{1})}.\label{eq:diff02}
\end{eqnarray}
If the smallest Ritz value $\theta_{1}^{(k+1)}$ is close to $\lambda_{1}$,
then the second term of the right-hand side in \eqref{eq:diff02}
will be negligible in comparison to the first one, since
\[
E_{k}^{(\theta_{1}^{(k+1)},\lambda_{1})}=\mathcal{O}(1),
\]
see \eqref{eq:Ek}, and since $\eta_{1,k}^{(\lambda_{1})}$ is bounded
away from zero; see \eqref{eq:boundedaway}. Therefore, one can expect
that
\begin{equation}
\alpha_{k+1}-\alpha_{k+1}^{(\lambda_{1})}\ \approx\ \eta_{1,k}^{(\lambda_{1})}\frac{\theta_{1}^{(k+1)}-\lambda_{1}}{\theta_{1}^{(k)}-\theta_{1}^{(k+1)}}.\label{eq:closeness}
\end{equation}
The size of the term on the right-hand side is related to the speed
of convergence of the smallest Ritz value $\theta_{1}^{(k)}$ to $\lambda_{1}$.
Denoting
\[
\frac{\theta_{1}^{(k+1)}-\lambda_{1}}{\theta_{1}^{(k)}-\lambda_{1}}=\rho_{k}<1,
\]
we obtain
\[
\frac{\theta_{1}^{(k+1)}-\lambda_{1}}{\theta_{1}^{(k)}-\theta_{1}^{(k+1)}}=\frac{\frac{\theta_{1}^{(k+1)}-\lambda_{1}}{\theta_{1}^{(k)}-\lambda_{1}}}{1-\frac{\theta_{1}^{(k+1)}-\lambda_{1}}{\theta_{1}^{(k)}-\lambda_{1}}}=\frac{\rho_{k}}{1-\rho_{k}}.
\]
For example, if the convergence of $\theta_{1}^{(k)}$ to $\lambda_{1}$
is superlinear, i.e., if $\rho_{k}\rightarrow0$, {then $\alpha_{k+1}$
and $\alpha_{k+1}^{(\lambda_{1})}$ are close.}

\subsection{{Numerical experiments}}

Let us demonstrate {numerically} the theoretical results {described in previous sections} using our model problem. To compute
the following results, we, again, use Matlab's vpa arithmetic with 128 {decimal} 
digits.

\begin{figure}[!htbp]
\centering{}\includegraphics[width=11cm]{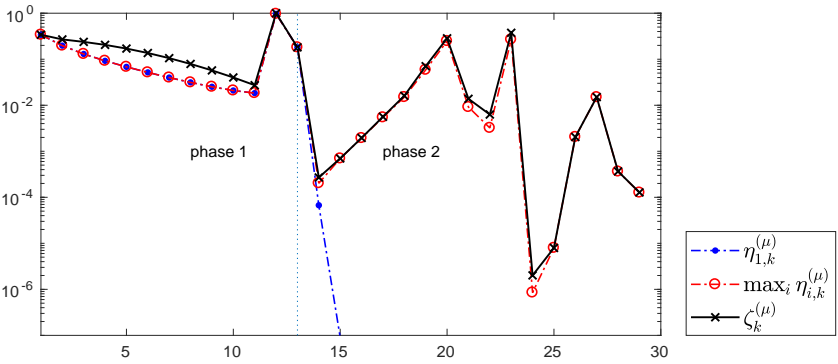} \caption{First term $\eta_{1,k}^{(\mu)}$, maximum term $\eta_{i,k}^{(\mu)}$,
and the sum $\zeta_{k}^{(\mu)}$ for $\mu=\mu_{3}$.}
\label{fig-7}
\end{figure}

We first consider $\mu=\mu_{3}=(1-10^{-3})\lambda_{1}$ for which
we have $\lambda_{1}-\mu=10^{-9}$. The switch from phase 1 to phase
2 occurs at iteration 13. Figure~\ref{fig-7} displays the first
term $\eta_{1,k}^{(\mu)}$ and the maximum term $\eta_{i,k}^{(\mu)}$
as well as the sum $\zeta_{k}^{(\mu)}$ defined by~\eqref{eq:zeta},
see Lemma~\ref{lem:alphamu}, as a function of the iteration number
$k$. In phase 1 the first term $\eta_{1,k}^{(\mu)}$ is the largest one. As predicted, after the start of phase 2, the first term is
decreasing quite fast.

\begin{figure}[!htbp]
\centering{}\includegraphics[width=11cm]{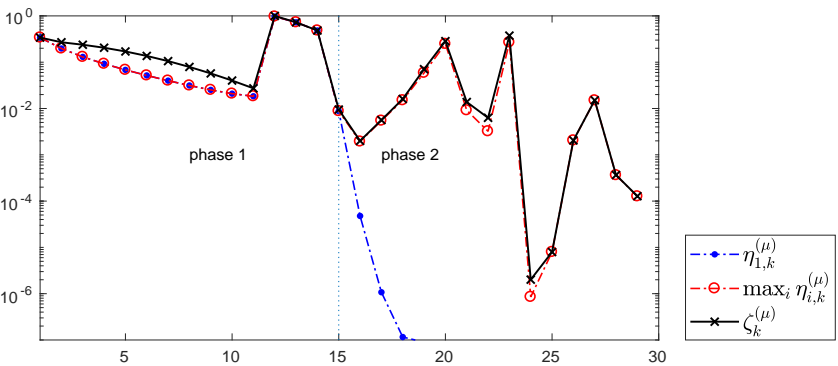} \caption{First term $\eta_{1,k}^{(\mu)}$, maximum term $\eta_{i,k}^{(\mu)}$,
and the sum $\zeta_{k}^{(\mu)}$ for $\mu=\mu_{8}$.}
\label{fig-4}
\end{figure}

Let us now use $\mu=\mu_{8}=(1-10^{-8})\lambda_{1}$ for which we
have $\lambda_{1}-\mu=10^{-14}$. The switch from phase 1 to phase
2 occurs at iteration 15; see Figure~\ref{fig-4}. The conclusions
are the same as for $\mu_{3}$.

\begin{figure}[!htbp]
\centering{}\includegraphics[width=11cm]{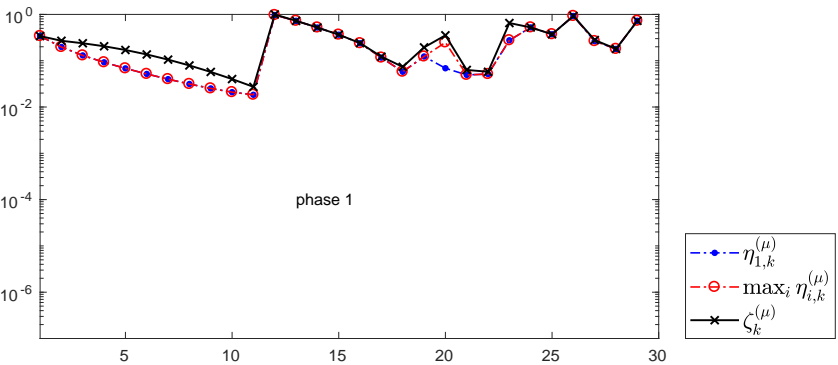} \caption{First term $\eta_{1,k}^{(\mu)}$, maximum term $\eta_{i,k}^{(\mu)}$,
and the sum $\zeta_{k}^{(\mu)}$ for $\mu=\mu_{50}$.}
\label{fig-3}
\end{figure}

The behavior of the first term is completely different for $\mu=(1-10^{-50})\lambda_{1}$
which almost corresponds to using the exact smallest eigenvalue $\lambda_{1}$.
The maximum term of the sum is then
almost always the first one; see Figure~\ref{fig-3}. Remember that,
for this value of $\mu$, we are always in phase 1.

\begin{figure}[!htbp]
\centering{}\includegraphics[width=11cm]{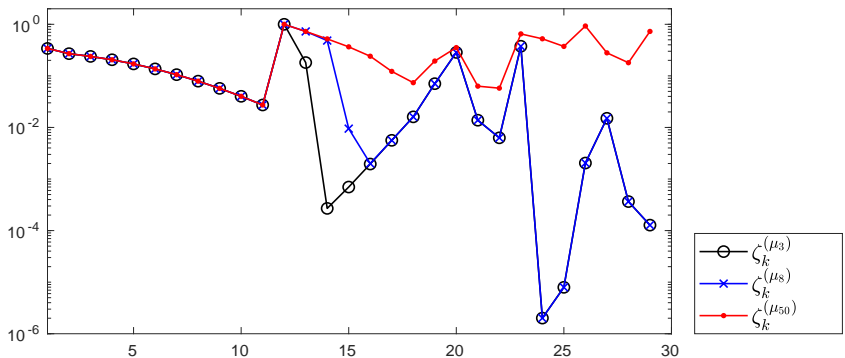} \caption{Comparison of the sums $\zeta_{k}^{(\mu_{3})}$, $\zeta_{k}^{(\mu_{8})}$,
and $\zeta_{k}^{(\mu_{50})}$.}
\label{fig-3-1}
\end{figure}

Finally, in Figure~\ref{fig-3-1} we present a comparison of the sums
$\zeta_{k}^{(\mu)}$ for $\mu_{3}$, $\mu_{8}$, and $\mu_{50}$.
We observe that from the beginning up to iteration $12$, all sums
visually coincide. Starting from iteration 13 we enter phase~2 for
$\mu=\mu_{3}$ and the sum $\zeta_{k}^{(\mu_{3})}$ starts to differ
significantly from the other sums, in particular from the ``reference''
term $\zeta_{k}^{(\mu_{50})}$. Similarly, for $k=15$ we enter phase~2
for $\mu=\mu_{8}$ and $\zeta_{k}^{(\mu_{8})}$ and $\zeta_{k}^{(\mu_{50})}$
start to differ. We can also observe that $\zeta_{k}^{(\mu_{3})}$
and $\zeta_{k}^{(\mu_{8})}$ significantly differ only in iterations
13, 14, and 15, i.e., when we are in phase~2 for $\mu=\mu_{3}$ but
in phase~1 for $\mu=\mu_{8}$. In all other iterations, $\zeta_{k}^{(\mu_{3})}$
and $\zeta_{k}^{(\mu_{8})}$ visually coincide.

\begin{figure}[!htbp]
\centering{}\includegraphics[width=11cm]{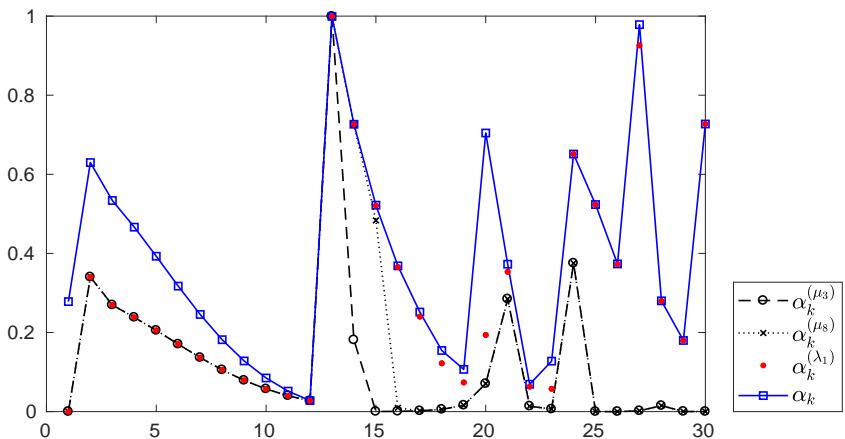} \caption{$\alpha_{k}^{(\mu_{3})}$, $\alpha_{k}^{(\mu_{8})}$, $\alpha_{k}^{(\lambda_{1})}$,
and $\alpha_{k}$.}
\label{fig:1}
\end{figure}

In Figure~\ref{fig:1} we plot the coefficients $\alpha_{k}^{(\mu_{3})}$, $\alpha_{k}^{(\mu_{8})}$, $\alpha_{k}^{(\lambda_{1})}$ and $\alpha_{k}$, so that we can compare the observed behaviour with the predicted one.
Phase~2 starts for $\mu_{3}$ at iteration 13, and for $\mu_{8}$
at iteration 15; see also Figure~\ref{fig-0}. For $k\leq13$
we observe that
\[
\alpha_{k}^{(\mu_{3})}\approx\alpha_{k}^{(\mu_{8})}\approx\alpha_{k}^{(\lambda_{1})}
\]
as explained in {Section~\ref{sec:alphamu}} and $\alpha_k$ is larger. For $k\geq16$, the
first terms $\eta_{1,k-1}^{(\mu_{3})}$ and $\eta_{1,k-1}^{(\mu_{8})}$
are close to zero, and, as explained in {Section~\ref{sec:alphamu}},
\[
\alpha_{k}^{(\mu_{3})}\approx\alpha_{k}^{(\mu_{8})}.
\]
For $k=14$ and $k=15$, $\alpha_{k}^{(\mu_{3})}$ and $\alpha_{k}^{(\mu_{8})}$
can differ significantly because $\alpha_{k}^{(\mu_{3})}$ is already
in phase~2 while $\alpha_{k}^{(\mu_{8})}$ is still in phase~1.

We can also observe that $\alpha_{k}$ can be very close to $\alpha_{k}^{(\lambda_{1})}$
when the smallest Ritz value $\theta_{1}^{(k)}$ is a tight approximation
to $\lambda_{1}$, i.e., in later iterations. We know that the closeness
of $\alpha_{k}$ to $\alpha_{k}^{(\lambda_{1})}$ depends on the speed
of convergence of the smallest Ritz value to $\lambda_{1}$; see \eqref{eq:closeness}
and the corresponding discussion.

\section{The Gauss-Radau bound in phase~2}

Our aim in this section is to investigate the relation between the
basic Gauss-Radau upper bound \eqref{eq:GR} and the simple upper
bound; see \eqref{eq:basic}. Recall the notation
\[
\phi_{k}=\frac{\left\Vert r_{k}\right\Vert ^{2}}{\left\Vert p_{k}\right\Vert ^{2}};
\]
see \eqref{eq:phiupdate}. In particular, we would like to explain
why the two bounds almost coincide in phase~2. Note that using \eqref{eq:gammalpha}
we obtain
\begin{equation}
\alpha_{k+1}^{(\mu)}=\left(\gamma_{k}^{(\mu)}\right)^{-1}+\frac{\delta_{k}}{\gamma_{k-1}}\label{eq:alphamu}
\end{equation}
and from \eqref{eq:alphaspectral} it follows
\begin{eqnarray*}
\alpha_{k+1}^{(\mu)} & = & \mu+\beta_{k}^{2}e_{k}^{T}\left(T_{k}-\mu I\right)^{-1}e_{k},\qquad\beta_{k}^{2}=\frac{1}{\gamma_{k-1}}\frac{\delta_{k}}{\gamma_{k-1}}.
\end{eqnarray*}
Therefore,
\begin{eqnarray}
\left(\gamma_{k}^{(\mu)}\right)^{-1} & = & \mu+\beta_{k}^{2}\left(e_{k}^{T}\left(T_{k}-\mu I\right)^{-1}e_{k}-\gamma_{k-1}\right).\label{eq:gammatilde}
\end{eqnarray}
In the following lemma we {give} another expression for $e_{k}^{T}\left(T_{k}-\mu I\right)^{-1}e_{k}$.

\begin{lemma}
Let $0<\mu<\theta_{1}^{(k)}$. Then it holds that
\begin{equation}
e_{k}^{T}\left(T_{k}-\mu I\right)^{-1}e_{k}=\gamma_{k-1}+\mu\frac{\gamma_{k-1}^{2}}{\phi_{k-1}}+\sum_{i=1}^{k}\left(\frac{\mu}{\theta_{i}^{(k)}}\right)^{2}\frac{\left(s_{k,i}^{(k)}\right)^{2}}{\theta_{i}^{(k)}-\mu}.\label{eq:omega-1}
\end{equation}
\end{lemma}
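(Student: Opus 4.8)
The plan is to express the quantity $e_{k}^{T}(T_{k}-\mu I)^{-1}e_{k}$ using the spectral decomposition of $T_k$ and then rewrite the result in terms of CG-related quantities, using the connection between the last components $s_{k,i}^{(k)}$ and the first components $s_{1,i}^{(k)}$ of the eigenvectors. First I would write
\[
e_{k}^{T}(T_{k}-\mu I)^{-1}e_{k}=\sum_{i=1}^{k}\frac{\left(s_{k,i}^{(k)}\right)^{2}}{\theta_{i}^{(k)}-\mu},
\]
and use the algebraic identity $\frac{1}{\theta-\mu}=\frac{1}{\theta}+\frac{\mu}{\theta(\theta-\mu)}=\frac{1}{\theta}+\frac{\mu}{\theta^{2}}+\frac{\mu^{2}}{\theta^{2}(\theta-\mu)}$ to split the sum into three pieces. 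The third piece is precisely the sum appearing on the right-hand side of \eqref{eq:omega-1}, so the work reduces to identifying
\[
\sum_{i=1}^{k}\frac{\left(s_{k,i}^{(k)}\right)^{2}}{\theta_{i}^{(k)}}=\gamma_{k-1}
\qquad\text{and}\qquad
\sum_{i=1}^{k}\frac{\left(s_{k,i}^{(k)}\right)^{2}}{\left(\theta_{i}^{(k)}\right)^{2}}=\frac{\gamma_{k-1}^{2}}{\phi_{k-1}}.
\]

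The first of these is $e_{k}^{T}T_{k}^{-1}e_{k}$, which equals $(T_k^{-1})_{k,k}$. Using the $LDL^{T}$ factorization $T_{k}=L_{k}D_{k}L_{k}^{T}$ from \eqref{eq:matrixLDL}, and the fact that $L_k$ is lower triangular with unit diagonal, one gets $(T_k^{-1})_{k,k}=e_{k}^{T}L_{k}^{-T}D_{k}^{-1}L_{k}^{-1}e_{k}=e_{k}^{T}D_{k}^{-1}e_{k}$ because $L_{k}^{-1}e_{k}=e_{k}$ (the last row of $L_k$ contributes only $e_k$ after inversion). Hence $(T_k^{-1})_{k,k}=\gamma_{k-1}$, the last diagonal entry of $D_k^{-1}$. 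For the second sum, I would observe that $\sum_{i}(s_{k,i}^{(k)})^{2}(\theta_{i}^{(k)})^{-2}=e_{k}^{T}T_{k}^{-2}e_{k}=\|T_{k}^{-1}e_{k}\|^{2}$, and compute $T_{k}^{-1}e_{k}=L_{k}^{-T}D_{k}^{-1}L_{k}^{-1}e_{k}=L_{k}^{-T}D_{k}^{-1}e_{k}=\gamma_{k-1}L_{k}^{-T}e_{k}$; so $\|T_{k}^{-1}e_{k}\|^{2}=\gamma_{k-1}^{2}\|L_{k}^{-T}e_{k}\|^{2}$. It remains to identify $\|L_{k}^{-T}e_{k}\|^{2}=\|L_{k}^{-1}e_{k}\|^{2}$ — wait, these are different; rather $\|L_k^{-T}e_k\|^2$ is the squared norm of the last column of $L_k^{-1}$, equivalently the last diagonal entry of $L_{k}^{-1}L_{k}^{-T}=(L_{k}^{T}L_{k})^{-1}$. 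I would then relate $(L_k^T L_k)^{-1}_{k,k}$ to $\phi_{k-1}$ via the recurrence \eqref{eq:phiupdate}, $\phi_{j+1}^{-1}=1+\phi_{j}^{-1}\delta_{j+1}$, noticing that the last column of $L_k^{-1}$ has entries that are (signed) products of the $\sqrt{\delta_j}$'s, whose squared norm telescopes to exactly $\phi_{k-1}^{-1}$; combined with the $\gamma_{k-1}^2$ factor this gives $\gamma_{k-1}^{2}/\phi_{k-1}$.

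The main obstacle I anticipate is the bookkeeping in the second identity: correctly computing the last column of $L_{k}^{-1}$ (its entries are $(-1)^{k-j}\sqrt{\delta_{j}\delta_{j+1}\cdots\delta_{k-1}}$ in row $j$), summing their squares, and matching the result against the recurrence defining $\phi_{k-1}$. One must verify that $\phi_{k-1}^{-1}=1+\delta_{k-1}+\delta_{k-1}\delta_{k-2}+\cdots+\delta_{k-1}\cdots\delta_{1}=\sum_{j=1}^{k}\prod_{\ell=j}^{k-1}\delta_{\ell}$, which follows by unrolling \eqref{eq:phiupdate} and using $\phi_0=1$; this is exactly $\|L_k^{-T}e_k\|^2$. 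Everything else — the partial-fraction split and the identification of the first sum — is routine once the $LDL^{T}$ structure is in hand. I would close by substituting the three pieces back into the split sum to obtain \eqref{eq:omega-1}.
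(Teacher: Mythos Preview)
Your argument is correct and, in fact, somewhat cleaner than the paper's. The paper expands $(T_k-\mu I)^{-1}$ as a Neumann series $\sum_{j\ge 0}\mu^{j}T_k^{-(j+1)}$, peels off the $j=0$ and $j=1$ terms, and then re-sums the tail $j\ge 2$ spectrally to produce the third term. You instead start spectrally and apply the finite partial-fraction identity $\frac{1}{\theta-\mu}=\frac{1}{\theta}+\frac{\mu}{\theta^{2}}+\frac{\mu^{2}}{\theta^{2}(\theta-\mu)}$, which avoids the infinite series altogether. For the identification $e_k^{T}T_k^{-2}e_k=\gamma_{k-1}^{2}/\phi_{k-1}$, the paper quotes an explicit formula for $T_k^{-1}e_k$ in terms of residual norms (from~\cite{Me2020}) and uses $\sum_{i=0}^{k-1}\|r_{k-1}\|^{2}/\|r_i\|^{2}=\|p_{k-1}\|^{2}/\|r_{k-1}\|^{2}$; you instead compute $\|L_k^{-T}e_k\|^{2}$ directly from the bidiagonal structure and match it to the unrolled recurrence~\eqref{eq:phiupdate}. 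Both routes land on the same two key identities $e_k^{T}T_k^{-1}e_k=\gamma_{k-1}$ and $e_k^{T}T_k^{-2}e_k=\gamma_{k-1}^{2}/\phi_{k-1}$; yours is self-contained, the paper's leans on a cited formula.

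One small slip to fix in the write-up: you refer to ``the last column of $L_k^{-1}$'' when you mean the last column of $L_k^{-T}$ (equivalently, the last \emph{row} of $L_k^{-1}$). Since $L_k^{-1}$ is unit lower triangular, its last column is just $e_k$; the vector with entries $(-1)^{k-j}\sqrt{\delta_j\cdots\delta_{k-1}}$ that you correctly compute is $L_k^{-T}e_k$. The subsequent computation and the match to $\phi_{k-1}^{-1}=\sum_{j=1}^{k}\prod_{\ell=j}^{k-1}\delta_\ell$ are right.
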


\begin{proof}
Since $\left\Vert \mu T_{k}^{-1}\right\Vert <1$, we obtain using a Neumann series
\begin{eqnarray*}
\left(T_{k}-\mu I\right)^{-1}=\left(I-\mu T_{k}^{-1}\right)^{-1}T_{k}^{-1} & = & \left(\sum_{j=0}^{\infty}\mu^{j}T_{k}^{-j}\right)T_{k}^{-1}
\end{eqnarray*}
so that
\[
e_{k}^{T}\left(T_{k}-\mu I\right)^{-1}e_{k}=e_{k}^{T}T_{k}^{-1}e_{k}+\mu e_{k}^{T}T_{k}^{-2}e_{k}+\sum_{j=2}^{\infty}\mu^{j}e_{k}^{T}T_{k}^{-(j+1)}e_{k}.
\]
We now express the terms on the right-hand side using the CG coefficients
and the quantities from the spectral factorization of $T_{k}$. Using
$T_{k}=L_{k}D_{k}L_{k}^{T}$ we obtain $e_{k}^{T}T_{k}^{-1}e_{k}=\gamma_{k-1}$.
After some algebraic manipulation, see, e.g., \cite[p.~1369]{Me2020}
we get
\[
T_{k}^{-1}e_{k}=\gamma_{k-1}\Vert r_{k-1}\Vert\left[\begin{array}{c}
\frac{(-1)^{k-1}}{\Vert r_{0}\Vert}\\
\vdots\\
\frac{1}{\Vert r_{k-1}\Vert}
\end{array}\right]
\]
so that
\[
e_{k}^{T}T_{k}^{-2}e_{k}=e_{k}^{T}T_{k}^{-1}T_{k}^{-1}e_{k}=\gamma_{k-1}^{2}\sum_{i=0}^{k-1}\frac{\Vert r_{k-1}\Vert^{2}}{\Vert r_{i}\Vert^{2}}=\gamma_{k-1}^{2}\frac{\|p_{k-1}\|^{2}}{\|r_{k-1}\|^{2}}=\frac{\gamma_{k-1}^{2}}{\phi_{k-1}}.
\]
Finally,
\begin{eqnarray*}
e_{k}^{T}\left(\sum_{j=2}^{\infty}\mu^{j}T_{k}^{-(j+1)}\right)e_{k} & = & e_{k}^{T}S_{k}\left(\sum_{j=2}^{\infty}\mu^{j}\Theta_{k}^{-(j+1)}\right)S_{k}^{T}e_{k}
\end{eqnarray*}
where the diagonal entries of the diagonal matrix
\[
\sum_{j=2}^{\infty}\mu^{j}\Theta_{k}^{-(j+1)}
\]
have the form
\begin{eqnarray*}
\frac{1}{\theta_{i}^{(k)}}\left(\frac{\mu}{\theta_{i}^{(k)}}\right)^{2}\sum_{j=0}^{\infty}\left(\frac{\mu}{\theta_{i}^{(k)}}\right)^{j} & = & \frac{1}{\theta_{i}^{(k)}}\left(\frac{\mu}{\theta_{i}^{(k)}}\right)^{2}\frac{1}{1-\frac{\mu}{\theta_{i}^{(k)}}}=\left(\frac{\mu}{\theta_{i}^{(k)}}\right)^{2}\frac{1}{\theta_{i}^{(k)}-\mu}.
\end{eqnarray*}
Hence,
\[
e_{k}^{T}\left(\sum_{j=2}^{\infty}\mu^{j}T_{k}^{-(j+1)}\right)e_{k}=\sum_{i=1}^{k}\left(\frac{\mu}{\theta_{i}^{(k)}}\right)^{2}\frac{\left(s_{k,i}^{(k)}\right)^{2}}{\theta_{i}^{(k)}-\mu}.
\]
\end{proof}


Based on the previous lemma we can now express the coefficient $\gamma_{k}^{(\mu)}$.


\begin{theorem}
\label{lem:gammamu}Let $0<\mu<\theta_{1}^{(k)}$. Then it holds that
\begin{equation}
\left(\gamma_{k}^{(\mu)}\right)^{-1}=\frac{\mu}{\phi_{k}}+\sum_{i=1}^{k}\left(\frac{\mu}{\theta_{i}^{(k)}}\right)^{2}\eta_{i,k}^{(\mu)}.\label{eq:omega}
\end{equation}
\end{theorem}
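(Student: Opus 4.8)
The plan is to obtain \eqref{eq:omega} directly by inserting the formula \eqref{eq:omega-1} from the previous lemma into the identity \eqref{eq:gammatilde} and then recognising the three groups of terms. First I would make the substitution: the $\gamma_{k-1}$ that is subtracted inside the bracket of \eqref{eq:gammatilde} cancels the leading $\gamma_{k-1}$ produced by the $j=0$ term of the Neumann series in \eqref{eq:omega-1}, leaving
\[
\left(\gamma_{k}^{(\mu)}\right)^{-1}=\mu+\beta_{k}^{2}\left(\mu\,\frac{\gamma_{k-1}^{2}}{\phi_{k-1}}+\sum_{i=1}^{k}\left(\frac{\mu}{\theta_{i}^{(k)}}\right)^{2}\frac{\left(s_{k,i}^{(k)}\right)^{2}}{\theta_{i}^{(k)}-\mu}\right).
\]

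Next I would absorb the factor $\beta_{k}^{2}$ into the sum using the definition $\eta_{i,k}^{(\mu)}=(\beta_{k}s_{k,i}^{(k)})^{2}/(\theta_{i}^{(k)}-\mu)$ recorded in \eqref{eq:alphaspectral}; this turns $\beta_{k}^{2}(s_{k,i}^{(k)})^{2}/(\theta_{i}^{(k)}-\mu)$ into $\eta_{i,k}^{(\mu)}$ and hence the whole sum into $\sum_{i=1}^{k}(\mu/\theta_{i}^{(k)})^{2}\eta_{i,k}^{(\mu)}$, which is exactly the second term on the right-hand side of \eqref{eq:omega}. It then only remains to identify the first term, i.e.\ to show $\mu+\beta_{k}^{2}\mu\gamma_{k-1}^{2}/\phi_{k-1}=\mu/\phi_{k}$, equivalently $1+\beta_{k}^{2}\gamma_{k-1}^{2}/\phi_{k-1}=1/\phi_{k}$. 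Here I would use $\beta_{k}^{2}=\delta_{k}/\gamma_{k-1}^{2}$, which follows from the CG--Lanczos relations \eqref{eq:CGLanczos} (or from the display $\beta_{k}^{2}=\frac{1}{\gamma_{k-1}}\frac{\delta_{k}}{\gamma_{k-1}}$ appearing just before the lemma), so that $\beta_{k}^{2}\gamma_{k-1}^{2}=\delta_{k}$ and the required identity reduces to $1+\delta_{k}/\phi_{k-1}=1/\phi_{k}$, which is precisely the recurrence \eqref{eq:phiupdate} with $j=k-1$. Combining the three pieces yields \eqref{eq:omega}.

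I do not expect a genuine obstacle: the theorem is a bookkeeping consequence of the preceding lemma together with the CG--Lanczos coefficient identities and the $\phi$-recurrence. The only point requiring a little care is to make sure the cancellation of the $\gamma_{k-1}$ terms and the transfer of $\beta_{k}^{2}$ into the sum are carried out without picking up a spurious power of $\mu/\theta_{i}^{(k)}$; a quick check of the exponents (the $j=0$ and $j=1$ terms of the Neumann series contribute $\gamma_{k-1}$ and $\mu\gamma_{k-1}^{2}/\phi_{k-1}$, while the $j\ge 2$ tail contributes the weighted sum with the factor $(\mu/\theta_{i}^{(k)})^{2}$) confirms the arithmetic.
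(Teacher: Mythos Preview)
Your proposal is correct and follows essentially the same route as the paper: substitute \eqref{eq:omega-1} into \eqref{eq:gammatilde}, cancel the $\gamma_{k-1}$, use $\beta_{k}^{2}\gamma_{k-1}^{2}=\delta_{k}$ together with the recurrence \eqref{eq:phiupdate} to produce $\mu/\phi_{k}$, and recognise the remaining sum as $\sum_{i}(\mu/\theta_{i}^{(k)})^{2}\eta_{i,k}^{(\mu)}$. There is no substantive difference from the paper's argument.
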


\begin{proof}
We start with \eqref{eq:gammatilde}. Using the previous lemma
\begin{eqnarray*}
\left(\gamma_{k}^{(\mu)}\right)^{-1} & = & \mu+\mu\beta_{k}^{2}\gamma_{k-1}^{2}\phi_{k-1}^{-1}+\beta_{k}^{2}e_{k}^{T}\left(\sum_{j=2}^{\infty}\mu^{j}T_{k}^{-(j+1)}\right)e_{k}\\
 & = & \mu\left(1+\delta_{k}\phi_{k-1}^{-1}\right)+\beta_{k}^{2}\sum_{i=1}^{k}\left(\frac{\mu}{\theta_{i}^{(k)}}\right)^{2}\frac{\left(s_{k,i}^{(k)}\right)^{2}}{\theta_{i}^{(k)}-\mu}\\
 & = & \mu\phi_{k}^{-1}+\sum_{i=1}^{k}\left(\frac{\mu}{\theta_{i}^{(k)}}\right)^{2}\frac{\left(\beta_{k}s_{k,i}^{(k)}\right)^{2}}{\theta_{i}^{(k)}-\mu},
\end{eqnarray*}
where we have used relation \eqref{eq:phiupdate}.
\end{proof}

Obviously, using \eqref{eq:omega}, the basic Gauss-Radau upper bound
\eqref{eq:GR} and the simple upper bound in \eqref{eq:basic} are
close to each other if and only if
\begin{equation}
\sum_{i=1}^{k}\left(\frac{\mu}{\theta_{i}^{(k)}}\right)^{2}\eta_{i,k}^{(\mu)}\,\ll\,\frac{\mu}{\phi_{k}},\label{eq:negligible}
\end{equation}
which can also be written as
\begin{equation}
\left(\frac{\mu}{\theta_{1}^{(k)}}\right)^{2}\frac{\eta_{1,k}^{(\mu)}}{\mu}+\sum_{i=2}^{k}\left(\frac{\beta_{k}s_{k,i}^{(k)}}{\theta_{i}^{(k)}}\right)^{2}\frac{\mu}{\theta_{i}^{(k)}-\mu}\,\ll\,\phi_{k}^{-1}.\label{eq:negligible2}
\end{equation}

{Under the assumptions formulated in Section~\ref{sec:assumptions},
in particular that}
$\lambda_{1}$ is well separated from $\lambda_{2}$,
and that $\mu$ is a tight underestimate to $\lambda_{1}$ in the
sense of \eqref{eq:separate}, the sum of terms on the left-hand side
of \eqref{eq:negligible2} can be replaced by its tight upper bound
\begin{equation}
\left(\frac{\lambda_{1}}{\theta_{1}^{(k)}}\right)^{2}\frac{\eta_{1,k}^{(\mu)}}{\mu}+\sum_{i=2}^{k}\left(\frac{\beta_{k}s_{k,i}^{(k)}}{\theta_{i}^{(k)}}\right)^{2}\frac{\lambda_{1}}{\theta_{i}^{(k)}-\lambda_{1}}\label{eq:upper2}
\end{equation}
which simplifies the explanation of the dependence of the sum in \eqref{eq:negligible2}
on $\mu$.

The second term in \eqref{eq:upper2} is independent of
$\mu$ and its size depends only on the behaviour of the underlying
Lanczos process. Here
\begin{equation}
\left(\frac{\beta_{k}s_{k,i}^{(k)}}{\theta_{i}^{(k)}}\right)^{2}=\frac{\left\Vert A\left(V_{k}s_{:,i}^{(k)}\right)-\theta_{i}^{(k)}\left(V_{k}s_{:,i}^{(k)}\right)\right\Vert ^{2}}{\left(\theta_{i}^{(k)}\right)^{2}}\label{eq:term1}
\end{equation}
can be seen as the relative accuracy to which the $i$th Ritz value
approximates an eigenvalue, and the size of the term
\begin{equation}
\frac{\lambda_{1}}{\theta_{i}^{(k)}-\lambda_{1}},\qquad i\geq2,\label{eq:term2}
\end{equation}
depends on the position of $\theta_{i}^{(k)}$ relatively to the smallest
eigenvalue. In particular, one can expect that the term \eqref{eq:term2}
can be of size $\mathcal{O}(1)$ if $\theta_{i}^{(k)}$ approximates
smallest eigenvalues, and it is small if $\theta_{i}^{(k)}$ approximates
largest eigenvalues.

Using the previous simplifications and assuming phase~2, the basic
Gauss-Radau upper bound \eqref{eq:GR} and the rightmost upper bound
in \eqref{eq:basic} are close to each other if and only if
\begin{equation}
\frac{\eta_{1,k}^{(\mu)}}{\mu}+\sum_{i=2}^{k}\left(\frac{\beta_{k}s_{k,i}^{(k)}}{\theta_{i}^{(k)}}\right)^{2}\frac{\lambda_{1}}{\theta_{i}^{(k)}-\lambda_{1}}\,\ll\,\phi_{k}^{-1}.\label{eq:negligible3}
\end{equation}
From {Section~\ref{sec:alphamu}} we know that $\eta_{1,k}^{(\mu)}$
goes to zero in phase~2. Hence, if
\begin{equation}
\eta_{1,k}^{(\mu)}<\mu,\label{eq:lessmu}
\end{equation}
which will happen for $k$ sufficiently large, then the first term
in \eqref{eq:negligible3} is smaller than the term on the right-hand
side.

As already mentioned, the sum of positive terms in
\eqref{eq:negligible3} depends only on approximation properties of
the underlying Lanczos process, that are not easy to predict in general.
Inspired by our model problem described in Section~\ref{sec:model},
we can just give an intuitive explanation why the sum could be small
in phase~2.

Phase~2 occurs in later CG iterations and it is
related to the convergence of the smallest Ritz value to the smallest
eigenvalue. If the smallest eigenvalue is well approximated by the
smallest Ritz value (to a high relative accuracy), then one can expect
that many eigenvalues of $A$ are relatively well approximated by
 Ritz values. If the eigenvalue $\lambda_{j}$ of
$A$ is well separated from the other eigenvalues and if it is well
approximated by a Ritz value, then the corresponding term \eqref{eq:term1}
measuring the relative accuracy to which $\lambda_{j}$ is approximated,
is going to be small.

\begin{figure}[!htbp]
\centering{}\includegraphics[width=11cm]{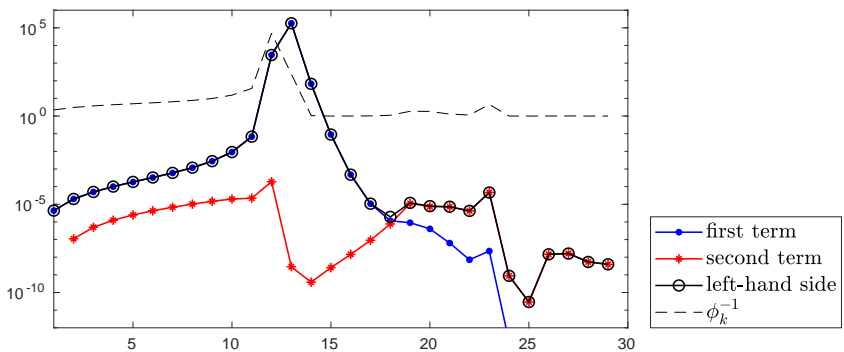} \caption{{The first and second term in \eqref{eq:upper2}, left-hand side of~\eqref{eq:negligible2}, and $\phi_k^{-1}$.}}
\label{fig:1-1}
\end{figure}

In particular, in our model problem, the smallest eigenvalues
are well separated from each other, and in phase~2 they are well
approximated by Ritz values. Therefore, the corresponding terms \eqref{eq:term1}
are small. Hence, the Ritz values that did not converge yet in phase~2,
are going to approximate eigenvalues in clusters which do not correspond
to smallest eigenvalues, i.e., for which the terms \eqref{eq:term2}
are small; see also Figure~\ref{fig-4} and Figure~\ref{fig-7}. In
our model problem, the sum of positive terms in \eqref{eq:negligible3}
is small in phase~2 because either \eqref{eq:term1} or \eqref{eq:term2}
are small. Therefore, one can expect that the validity of \eqref{eq:negligible3}
will mainly depend on the size of the first term in \eqref{eq:negligible3};
see Figure~\ref{fig:1-1}.

The size of the sum of positive terms in \eqref{eq:negligible3}
obviously depends on the clustering and the distribution of the eigenvalues,
and we cannot guarantee in general that it will be small in phase~2.
For example, it need not be small if the smallest eigenvalues of $A$
are clustered.

\section{Detection of phase~2}
\label{sec:detection}
For our model problem it is not hard to detect phase~2 from the coefficients
that are available during the computations. We first observe, see
Figure~\ref{fig:1-1}, that the coefficients
\begin{equation}
\gamma_{k}^{(\mu)}\quad\mbox{and}\quad\frac{\phi_{k}}{\mu},\label{eq:bracketing}
\end{equation}
and the corresponding bounds \eqref{eq:GR} and \eqref{eq:basic}
visually coincide from the beginning up to some iteration $\ell_{1}$.
From iteration $\ell_{1}+1$, the Gauss-Radau upper bound \eqref{eq:GR}
starts to be a much better approximation to the squared $A$-norm
of the error than the simple upper bound \eqref{eq:basic}. When phase~2
occurs, the Gauss-Radau upper bound \eqref{eq:GR} loses its accuracy
 and, starting from iteration $\ell_{2}$ (approximately
when \eqref{eq:lessmu} holds), it will again visually coincide with
the simple upper bound \eqref{eq:basic}. We observe that phase~2
occurs at some iteration $k$ where the two coefficients \eqref{eq:bracketing}
significantly differ, i.e., for $\ell_{1}<k<\ell_{2}.$ To measure
the agreement between the coefficients \eqref{eq:bracketing}, we can
use the easily computable relative distance
\begin{equation}
\frac{\frac{\phi_{k}}{\mu}-\gamma_{k}^{(\mu)}}{\gamma_{k}^{(\mu)}}=\phi_{k}\left[\left(\frac{\mu}{\theta_{1}^{(k)}}\right)^{2}
\frac{\eta_{1,k}^{(\mu)}}{\mu}+\sum_{i=2}^{k}\left(\frac{\beta_{k}s_{k,i}^{(k)}}{\theta_{i}^{(k)}}\right)^{2}\frac{\mu}{\theta_{i}^{(k)}
-\mu}\right].\label{eq:crit1}
\end{equation}
We will consider this relative distance to be small, if it is smaller
than 0.5.

\begin{figure}[!htbp]
\centering{}\includegraphics[width=11cm]{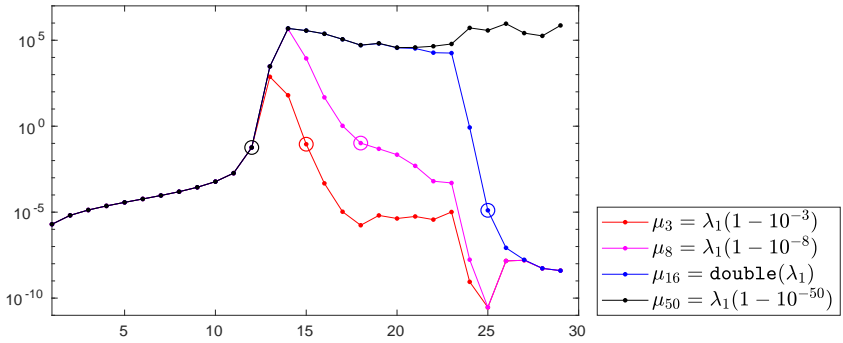} \caption{The behaviour of the relative distance in \eqref{eq:crit1} for various values
of $\mu$.}
\label{fig:71}
\end{figure}

The behavior of the term in \eqref{eq:crit1} for various values of
$\mu$ is shown in Figure~\ref{fig:71}. The index $\ell_{1}=12$ is
the same for all considered values of $\mu$. For $\mu_{3}$ we get
$\ell_{2}=15$ (red circle), for $\mu_{8}$ we get $\ell_{2}=18$
(magenta circle), for $\mu_{16}$ $\ell_{2}=25$ (blue circle),
and finally, for $\mu_{50}$ there is no index $\ell_{2}$.

As explained in the previous section, in more complicated
cases we cannot guarantee in general a similar behaviour of the relative
distance \eqref{eq:crit1} as in our model problem. For example, in
{many} practical problems we sometimes observe {a} staircase behaviour of the
$A$-norm of the error, when few iterations of stagnation are followed
by few iterations of rapid convergence. In such cases, the quantity
\eqref{eq:crit1} can oscillate several times and it
{can be impossible} to use it for detecting phase~2.
{Therefore, in general, we are not able to detect the beginning of phase~2 using \eqref{eq:crit1} reliably.
Nevertheless, in particular cases, the formulas \eqref{eq:omega} and
\eqref{eq:crit1} can be helpful.}

\section{{Upper bounds with a guaranteed accuracy}}
\label{sec:improved}
{
In some applications it might be of interest to obtain upper bounds
on the $A$-norm of the error that are sufficiently accurate. From
the previous sections we know that the basic Gauss-Radau upper bound
at iteration $k$ can be delayed, and, therefore, it can overestimate
the quantity of interest significantly. Nevertheless, going back
in the convergence history, we can easily find an iteration
index $\ell\leq k$ such that for all $0\leq i \leq \ell$, the
sufficiently accurate upper bound can be found. To find such $\ell$,
we will use the ideas described in \cite{StTi2002} and \cite{MePaTi2021}.
}

For integers {$k\geq j \geq \ell\geq 0$}, let us denote
\[
\Delta_{j}=\gamma_{j}\left\Vert r_{j}\right\Vert ^{2},\quad\Delta_{\ell:k}=\sum_{j=\ell}^{k}\Delta_{j},\quad\mbox{and}\quad\Delta_{j:j-1}=0.
\]
Denoting $\varepsilon_{j} \equiv \| x - x_j \|_A^2$, the relation
\eqref{eq:delay} takes the form
\begin{equation}
\varepsilon_{\ell}=\Delta_{\ell:k-1}+\varepsilon_{k},\label{eq:HSid}
\end{equation}
A more accurate bound at iteration $\ell$ is obtained such that the last
term in \eqref{eq:HSid} is replaced by the basic lower or upper bounds
on $\varepsilon_{k}$. In particular, the improved Gauss-Radau
upper bound {at iteration $\ell$} can be defined as
\begin{equation}
\Omega{}_{\ell:k}^{(\mu)}\,=\,\Delta_{\ell:k-1}+\gamma_{k}^{(\mu)}\left\Vert r_{k}\right\Vert ^{2},\label{eq:improved}
\end{equation}
and the improved Gauss lower bound is given by $\Delta_{\ell:k}$.

To guarantee the relative accuracy of the improved Gauss-Radau upper bound,
we would like to find the largest iteration index $\ell\leq k$ in the convergence history such that
\begin{equation}
\frac{\Omega{}_{\ell:k}^{(\mu)}-\varepsilon_{\ell}}{\varepsilon_{\ell}}\leq\tau\label{eq:crit1-1}
\end{equation}
where $\tau$ is a prescribed tolerance, say, $\tau=0.25$. Since
\[
\frac{\text{\ensuremath{\Omega{}_{\ell:k}^{(\mu)}}}-\varepsilon_{\ell}}{\varepsilon_{\ell}}<\frac{\Omega{}_{\ell:k}^{(\mu)}-\Delta_{\ell:k}}{\Delta_{\ell:k}}=\frac{\left\Vert r_{k}\right\Vert ^{2}\left(\gamma_{k}^{(\mu)}-\gamma_{k}\right)}{\Delta_{\ell:k}},
\]
we can require $\ell \leq k$ to be the {largest} integer such that
\begin{equation}
\frac{\left\Vert r_{k}\right\Vert ^{2}\left(\gamma_{k}^{(\mu)}-\gamma_{k}\right)}{\Delta_{\ell:k}}\leq\tau.\label{eq:crit2}
\end{equation}
If \eqref{eq:crit2} holds, then also \eqref{eq:crit1-1} holds. The
just described adaptive strategy for obtaining $\ell$ giving a sufficiently
accurate upper bound is summarized in Algorithm~\ref{alg:pseudo}.

\begin{algorithm}
\caption{CG with the improved Gauss-Radau upper bound}
\label{alg:pseudo}

\begin{algorithmic}[0]

\State \textbf{input} $A$, $b$, $x_{0}$, $\mu$, $\tau$

\State $r_{0}=b-Ax_{0}$, $p_{0}=r_{0}$

\State $\ell=0$, $\gamma_{0}^{(\mu)}=\frac{1}{\mu}$

\For{{$k=0,\dots,$}}

\State \texttt{cgiter($k$)}

\While{$k\geq \ell$ and \eqref{eq:crit2}}

\State accept $\Omega_{\ell:k}^{(\mu)}$

\State $\ell=\ell+1$

\EndWhile

\State $\gamma_{k+1}^{(\mu)}=\frac{\gamma_{k}^{(\mu)}-\gamma_{k}}{\mu(\gamma_{k}^{(\mu)}-\gamma_{k})+\delta_{k+1}}$

\EndFor

\end{algorithmic}
\end{algorithm}

Note that
\[
\frac{\Omega{}_{\ell:k}^{(\mu)}-\varepsilon_{\ell}}{\varepsilon_{\ell}}+\frac{\varepsilon_{\ell}-\Delta_{\ell:k}}{\varepsilon_{\ell}}<\frac{\Omega{}_{\ell:k}^{(\mu)}-\Delta_{\ell:k}}{\Delta_{\ell:k}},
\]
i.e., if \eqref{eq:crit2} holds, then $\tau$ represents also an
upper bound on the sum of relative errors of the improved lower and
upper bounds. In other words, if $\ell$ is such that \eqref{eq:crit2}
is satisfied, then both the improved Gauss-Radau upper bound as well
as the improved Gauss lower bound are sufficiently accurate.
{For a heuristic strategy}
focused on improving the accuracy of the Gauss lower bound, see \cite{MePaTi2021}.

In the previous sections we have seen that the basic Gauss-Radau upper bound
is delayed, in particular in phase~2. The delay of the basic Gauss-Radau
upper bound can be defined as the smallest nonnegative integer $j$
such that
\begin{equation}
\gamma_{\ell+j+1}^{(\mu)}\left\Vert r_{\ell+j+1}\right\Vert ^{2}<\varepsilon_{\ell}.\label{eq:udelay}
\end{equation}
Having sufficiently accurate lower and upper bounds (e.g., if \eqref{eq:crit2}
is satisfied), we can approximately determine the delay of the basic
Gauss-Radau upper bound as the smallest $j$ satisfying \eqref{eq:udelay}
where $\varepsilon_{\ell}$ in \eqref{eq:udelay} is replaced by its
tight lower bound $\Delta_{\ell:k}$ .

\section{Conclusions}
\label{sec:conclusions}
In this paper we discussed and analyzed the behaviour
of the Gauss-Radau upper bound on the $A$-norm of the error in CG.
In particular, we concentrated on the phenomenon observed during computations
showing that, in later CG iterations, the upper bound loses its accuracy,
it is almost independent of $\mu$, and visually coincides with the
simple upper bound. We explained that this phenomenon is closely related
to the convergence of the smallest Ritz value to the smallest eigenvalue
of $A$. It occurs when the smallest Ritz value is a better approximation
to the smallest eigenvalue than the prescribed underestimate $\mu$.
We developed formulas that can be helpful in understanding {this behavior.
Note that} the loss of accuracy of
the Gauss-Radau upper bound is not directly linked to rounding errors
in computations of the bound, but it is related to the finite precision
behaviour of the underlying Lanczos process. In more detail, the phenomenon
can occur when solving linear systems with clustered eigenvalues.
However, the results of finite precision CG computations can be seen
(up to some small inaccuracies) as the results of the exact CG algorithm
applied to a larger system with the system matrix having clustered
eigenvalues. Therefore, one can expect that the discussed phenomenon
can occur in practical computations not only when $A$ has clustered
eigenvalues, but also whenever orthogonality is lost in the CG algorithm.

\subsection*{Acknowledgments}
The work of Petr~Tich\'{y} was supported by the Grant Agency of the Czech Republic under grant no. 20-01074S.


\end{document}